\newtheorem{thm}{Theorem}[section]
\newtheorem{lem}[thm]{Lemma}
\newtheorem{prop}[thm]{Proposition}
\theoremstyle{definition}
\newtheorem{defn}[thm]{Definition}
\theoremstyle{remark}
\numberwithin{equation}{section}
\newcommand{\norm}[1]{\left\Vert#1\right\Vert}
\newcommand{\set}[1]{\left\{#1\right\}}
\begin{document}

\title[Upper and lower estimates of norms in variable exponent spaces]{On the upper and lower estimates of norms in variable exponent spaces}%
\author{Tengiz Kopaliani}%
\author{Nino Samashvili} %
\author{Shalva Zviadadze}
\address[Tengiz Kopaliani]{Faculty of Exact and Natural Sciences, Javakhishvili Tbilisi State University, 13, University St., Tbilisi, 0143, Georgia}%
\address[Nino Samashvili]{Faculty of Exact and Natural Sciences, Javakhishvili Tbilisi State University, 13, University St., Tbilisi, 0143, Georgia}%
\address[Shalva Zviadadze]{Faculty of Exact and Natural Sciences, Javakhishvili Tbilisi State University, 13, University St., Tbilisi, 0143, Georgia}%
\email[Tengiz Kopaliani]{tengizkopaliani@gmail.com}%
\email[Nino Samashvili]{n.samashvili@gmail.com}
\email[Shalva Zviadadze]{sh.zviadadze@gmail.com}

\thanks{The research of the first two authors is supported by Shota Rustaveli National Science Foundation grant \#DI/9/5-100/13. Research of last author supported by Shota Rustaveli National Science Foundation grant \#52/36.}

\subjclass[2010]{42B35, 42B20, 46B45, 42B25}

\keywords{Upper p-estimate, lower q-estimate, variable exponent Lebesgue space, Hardy-Littlewood maximal operator}%

\begin{abstract}
In the present paper we investigate some geometrical properties of the norms in Banach function spaces. Particularly there is shown that if exponent $1/p(\cdot)$ belongs to $BLO^{1/\log}$ then for the norm of corresponding variable exponent Lebesgue space we have the following lower estimate
$$\left\|\sum \chi_{Q}\|f\chi_{Q}\|_{p(\cdot)}/\|\chi_{Q}\|_{p(\cdot)}\right\|_{p(\cdot)}\leq
C\|f\|_{p(\cdot)}$$
where $\{Q\}$ defines disjoint partition of $[0;1]$. Also we have constructed variable exponent
Lebesgue space with above property which does not possess following upper estimation
$$\|f\|_{p(\cdot)}\leq C\left\|\sum \chi_{Q}\|f\chi_{Q}\|_{p(\cdot)}/\|\chi_{Q}\|_{p(\cdot)}\right\|_{p(\cdot)}.
$$

\end{abstract}
\maketitle
\section{Introduction}

Let $\Omega\subset\mathbb{R}^{n}$ and let $\mathcal{M}$
 be the space of all equivalence classes of Lebesgue
 measurable real-valued functions endowed
 with the topology of convergence in measure relative to each
 set of finite measure.

\begin{defn}
\label{defn_BFS}
 A Banach  subspace $X$ of $\mathcal{M}$ is called a Banach function
 space (BFS) on $\Omega$ if

$1)$ the norm $\|f\|_{X}$ is defined for every measurable
function $f$ and $f\in X$ if and only if
$\|f\|_{X}<\infty$. $\|f\|_{X}=0$ if and only if $f=0$ a.e.;

$2)$ $\||f|\|_{X}=\|f\|_{X}$ for all $f\in X$;

$3)$ if $0\leq f\leq g$ a.e., then $\|f\|_{X}\leq\|g\|_{X}$;

$4)$ if $0\leq f_{n}\uparrow f$ a.e., then $\|f_{n}\|_{X}\uparrow
\|f\|_{X}$;

$5)$ if $E$ is measurable subset of $\Omega$ such that $|E|<
\infty,$ (below we denote the Lebesgue measure of $E$ by $|E|$) then
$\|\chi_{E}\|_{X}<\infty$;

$6)$ for every measurable set $E,\,|E|<\infty,$ there is a constant
$C_{E}<\infty$ such that $\int_{E}f(t)dt\leq C_{E}\|f\|_{X}.$
\end{defn}

\par Given a BFS  $X,$  its associate space $X'$ is defined by
$$
X'=\left\{g:\,\,\,\,\int_{\Omega}|f(x)g(x)|dx<\infty\,\,\,\mbox{for
all}\,\,f\in X\right\}
$$
and endowed with the associate norm
$$
\|f\|_{X'}=\sup\left\{\int_{\Omega}|f(x)g(x)|dx\,\,:\,
\,\,\|g\|_{ X}\leq1\right\}.
$$
An immediate consequence of this definition is the generalized
H\"{o}lder's  inequality: for all $f\in X$ and $g\in X',$
$$
\left|\int_{\Omega}f(x)g(x)dx\right|\leq\|f\|_{X}\|g\|_{X'}.
$$
Furthermore, $X'$ is also  a BFS on $\Omega$ and $(X')'=X.$  The
associate space of $X$ is  closed norming subspace of the dual space
$X^{\ast}$, and equality
$$
\|f\|_{X}=\sup\left\{\int_{\Omega}|f(x)g(x)|dx\,\,:\,
\,\,\|g\|_{ X'}\leq1\right\}
$$
holds for all $f\in X$ (see \cite{BS}).

Given a Banach function space $X,$ define the scale of spaces
$X^{r},\,\,0<r<\infty,$ by
$$
X^{r}=\{f\in \mathcal{M}:\,\,|f|^{r}\in X\},
$$
with the "norm"
$$
\|f\|_{X^{r}}=\||f|^{r}\|^{1/r}_X.
$$
If $r\geq1,$ then $\|\cdot\|_{X^{r}}$ is again an actual norm and
$X^{r}$ is a Banach function space. However, if $r<1,$ need not be a
Banach function space. The simple example is the scale of
Lebesgue spaces: if $X=L^{p}(\Omega),\,(1\leq p<\infty),$
then $(L^{p})^{r}=L^{pr},$ and so $X^{r}$ is a Banach space only for
$r\geq1/p.$

 Let $\Im$ be some  fixed  family of   sequences
$\mathcal{Q}=\{Q_{i}\}$ of disjoint measurable  subsets of
$\Omega,\,|Q_{i}|>0$ such that
$\Omega=\cup_{Q_{i}\in\mathcal{Q}}Q_{i}.$ We ignore the difference
in notation caused by a null set.

Everywhere in the sequel $l_{\mathcal{Q}}$ is a Banach sequential
space (BSS), meaning that axioms 1)-6) from definition
\ref{defn_BFS} are satisfied with respect to the count measure. Let $e_{k}=e_{Q_{k}}$ denote the standard unit vectors in
$l_{\mathcal{Q}}.$

\par Kopaliani  in \cite{K1} introduced notions of uniformly upper (lower)
$l$-estimates.

\begin{defn}
 1)  Let $l=\left\{l_{\mathcal{Q}}\right\}_{\mathcal{Q}\in\Im}$ be a
family of BSSs. A BFS $X$ is said to satisfy a uniformly upper
$l$-estimate if there exists a constant $C<\infty$ such that for
every $f\in X$ and $\mathcal{Q}\in \Im$ we have
$$\|f\|_{X}\leq C\left\|\sum_{Q_{i}\in\mathcal{Q}}e_{i}\|f\chi_{Q_{i}}\|_{X}\right\|_{l_{\mathcal{Q}}}.$$
2) BFS $X$ is said to satisfy uniformly lower $l$-estimate if there
exists a constant $C<\infty$ such that for every $f\in X$ and
$\mathcal{Q}\in\Im$ we have
$$\|f\|_{X}\geq C\left\|\sum_{Q_{i}\in\mathcal{Q}}e_{i}\|f\chi_{Q_{i}}\|_{X}\right\|_{l_{\mathcal{Q}}}.$$
\end{defn}

Note that if in Definition 1.2 for
 all $\mathcal{Q}\in\Im,$  we take   one  discrete  Lebesgue
space $l_{p},\,(1\leq p<\infty),$  we obtain classical definition of
upper and lower $p$-estimates of Banach spaces
(see \cite{Sh}, \cite{FJ}). The existence of upper or lower $p$-estimates  in the  Banach spaces  is of great
interest in study of the structure of the space (see \cite{LZ}).
 Berezhnoi \cite{Be1, Be2} investigate uniformly upper (lower)
$l$-estimates of BFS, when discrete $l_{\mathcal{Q}}$ spaces for all
partition of $\Omega$ coincides to some discrete BSS.

\begin{defn}
\label{G_prop} A pair of BFSs $(X,Y)$ is said to have property $G$
if there exists a constant $C>0$ such that
$$\sum_{Q_{i}\in\mathcal{Q}}\|f\chi_{Q_{i}}\|_{X}\cdot\|g\chi_{Q_{i}}\|_{Y'}\leq C \cdot\|f\|_{X}\cdot\|g\|_{Y'}$$
for any $\mathcal{Q}\in \Im$ and every $f\in X,\,g\in Y'.$
\end{defn}

Definition \ref{G_prop} was introduced by Berezhnoi \cite{Be2}. Let
us remark that a pair $(L^{p}(\Omega),L^{q}(\Omega))$ possesses the
property $G$ if $p\leq q.$

The  connections between the property $G$ and uniformly upper (lower)
$l$-estimates of BFS-s was investigated in paper \cite{K1}.

\begin{thm}[\cite{K1}]
\label{theorem_equval_G_l_estimates}
 Let $(X,Y)$ be a pair of BFSs. Then the following assertions
are equivalent:

$1)$ The pair $(X,Y)$ of BFSs possesses property $G$.

$2)$ There is a family
$l=\left\{l_{\mathcal{Q}}\right\}_{\mathcal{Q}\in \Im}$ of BSSs such
that $X$ satisfies uniformly lower $l$-estimate and $Y$ satisfies
uniformly upper $l$-estimate.
\end{thm}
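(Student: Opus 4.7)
The plan is to establish both implications using Hölder-type arguments, with the forward direction $(1)\Rightarrow(2)$ proceeding through an explicit construction of the family $\{l_{\mathcal{Q}}\}$.

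For $(1)\Rightarrow(2)$, given $\mathcal{Q}=\{Q_i\}\in\Im$, I would define $l_{\mathcal{Q}}$ on the index set of $\mathcal{Q}$ by
\[
\left\|\sum_i a_i e_i\right\|_{l_{\mathcal{Q}}} := \sup_{\|h\|_{Y'}\leq 1}\sum_i |a_i|\,\|h\chi_{Q_i}\|_{Y'}.
\]
A direct check confirms that axioms $1)$--$6)$ of Definition \ref{defn_BFS} hold (with the counting measure), so $l_{\mathcal{Q}}$ is a BSS; in particular $\|e_k\|_{l_{\mathcal{Q}}}=1$, achieved by $h=\chi_{Q_k}/\|\chi_{Q_k}\|_{Y'}$. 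With this definition, the lower $l$-estimate for $X$ is merely a restatement of property $G$: for any admissible $h$, property $G$ gives $\sum_i \|f\chi_{Q_i}\|_X\,\|h\chi_{Q_i}\|_{Y'}\leq C\|f\|_X\|h\|_{Y'}\leq C\|f\|_X$, and taking the supremum over $h$ yields $\|\sum_i e_i\|f\chi_{Q_i}\|_X\|_{l_{\mathcal{Q}}}\leq C\|f\|_X$. The upper $l$-estimate for $Y$ needs nothing beyond generalized Hölder on each $Q_i$: for every $g\in Y$ and every $h$ with $\|h\|_{Y'}\leq 1$, one has $\int|gh|\leq \sum_i \|g\chi_{Q_i}\|_Y\|h\chi_{Q_i}\|_{Y'}$, and the supremum over $h$ produces $\|g\|_Y\leq \|\sum_i e_i\|g\chi_{Q_i}\|_Y\|_{l_{\mathcal{Q}}}$.

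For $(2)\Rightarrow(1)$, the crux is a duality step that converts the upper $l$-estimate for $Y$ into the bound
\[
\left\|\sum_i e_i\|h\chi_{Q_i}\|_{Y'}\right\|_{l'_{\mathcal{Q}}}\leq C\|h\|_{Y'}, \qquad h\in Y'.
\]
To prove this, fix a nonnegative sequence $a=(a_i)$ with $\|a\|_{l_{\mathcal{Q}}}\leq 1$ and $\eta>0$; using the definition of the associate norm on each $Q_i$, pick a nonnegative $\phi_i$ supported in $Q_i$ with $\|\phi_i\|_Y\leq 1$ and $\int \phi_i|h|\geq (1-\eta)\|h\chi_{Q_i}\|_{Y'}$. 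The function $\Phi:=\sum_i a_i\phi_i$ is then disjointly supported with $\|\Phi\chi_{Q_i}\|_Y\leq a_i$, and the lattice property of $l_{\mathcal{Q}}$ together with the upper $l$-estimate for $Y$ give $\|\Phi\|_Y\leq C\|a\|_{l_{\mathcal{Q}}}\leq C$. Pairing $\Phi$ against $h$,
\[
(1-\eta)\sum_i a_i\|h\chi_{Q_i}\|_{Y'}\leq \int \Phi|h|\leq \|\Phi\|_Y\|h\|_{Y'}\leq C\|h\|_{Y'}.
\]
Letting $\eta\to 0$ and taking the supremum over $a$ yields the claim, after which property $G$ follows from Hölder's inequality in the BSS $l_{\mathcal{Q}}$:
\[
\sum_i\|f\chi_{Q_i}\|_X\|g\chi_{Q_i}\|_{Y'}\leq \left\|\sum_i e_i\|f\chi_{Q_i}\|_X\right\|_{l_{\mathcal{Q}}}\left\|\sum_i e_i\|g\chi_{Q_i}\|_{Y'}\right\|_{l'_{\mathcal{Q}}}\leq C_1C_2\|f\|_X\|g\|_{Y'}.
\]

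The main obstacle is the duality step in $(2)\Rightarrow(1)$: one must paste the approximating pieces $\phi_i$ into a single function $\Phi\in Y$ and verify that the upper $l$-estimate controls $\|\Phi\|_Y$ by $\|a\|_{l_{\mathcal{Q}}}$. Everything else is a bookkeeping use of generalized Hölder's inequality, either in $X,Y$ or in the sequence space $l_{\mathcal{Q}}$, and should not require new ideas beyond routinely verifying that the constructed $l_{\mathcal{Q}}$ indeed satisfies the BSS axioms.
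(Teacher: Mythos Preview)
The paper does not give its own proof of Theorem~\ref{theorem_equval_G_l_estimates}; the result is quoted from \cite{K1} and used as a black box, so there is no in-paper argument to compare against. On its own merits your proposal is sound: the choice
\[
\left\|\sum_i a_i e_i\right\|_{l_{\mathcal{Q}}}=\sup_{\|h\|_{Y'}\le 1}\sum_i |a_i|\,\|h\chi_{Q_i}\|_{Y'}
\]
does yield a BSS, the lower $l$-estimate for $X$ is exactly property $G$, and the upper $l$-estimate for $Y$ follows from $(Y')'=Y$ and H\"older on each $Q_i$. For $(2)\Rightarrow(1)$ your duality step is the natural one; the only point worth making explicit is that $\Phi=\sum_i a_i\phi_i$ need not a priori lie in $Y$, so one should first bound $\|\Phi_N\|_Y$ for the partial sums $\Phi_N=\sum_{i\le N}a_i\phi_i$ (which are obviously in $Y$) uniformly by $C\|a\|_{l_{\mathcal{Q}}}$ and then invoke the Fatou property of $Y$ to conclude $\|\Phi\|_Y\le C$. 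With that small addition the argument is complete.
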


\begin{thm}[\cite{K1}]
\label{theorem_G_imply_G'_G''}
Let the pair $(X,X)$ of BFSs possesses property $G.$ Then there
exist constants $C_{1},C_{2}>0$ such that for every $f\in X$ and
$\mathcal{Q}\in \Im$ we have
\begin{equation}
\label{estim_upper_and_lower}
C_{1}\|f\|_{X}\leq\left\|\sum_{Q\in\mathcal{Q}}\frac{\|f\chi_{Q}\|_{X}}{\|\chi_{Q}\|_{X}}\chi_{Q}\right\|_{X}\leq
C_{2}\|f\|_{X}.
\end{equation}
\end{thm}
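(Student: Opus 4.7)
The plan is to derive both estimates directly from Definition 1.4 (property $G$) combined with the associate-space duality formula $\|h\|_X=\sup_{\|g\|_{X'}\leq 1}\int |h||g|\,dx$ and the generalized H\"older inequality; no appeal to Theorem 1.3 is needed.

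For the upper estimate, set $h:=\sum_{Q\in\mathcal{Q}}\frac{\|f\chi_Q\|_X}{\|\chi_Q\|_X}\chi_Q$ and pair it against a nonnegative $g\in X'$ with $\|g\|_{X'}\leq 1$. Splitting the integral along the partition and bounding $\int_Q g\,dx=\int \chi_Q(g\chi_Q)\,dx\leq \|\chi_Q\|_X\|g\chi_Q\|_{X'}$ via H\"older causes the factors $\|\chi_Q\|_X$ to cancel, leaving $\int hg\,dx\leq \sum_Q \|f\chi_Q\|_X\|g\chi_Q\|_{X'}$. Property $G$ applied to the pair $(f,g)$ bounds this sum by $C\|f\|_X\|g\|_{X'}$, and taking supremum over $g$ gives $\|h\|_X\leq C\|f\|_X$.

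The lower estimate is the subtler half. I would again invoke duality to write $\|f\|_X=\sup_{\|g\|_{X'}\leq 1}\int |f||g|\,dx$, split by the partition, and apply H\"older on each $Q$, obtaining $\|f\|_X\leq \sup_g \sum_Q \|f\chi_Q\|_X\|g\chi_Q\|_{X'}$. The key move is to apply property $G$ not with $f$ (which would be circular), but with the auxiliary function $h$ defined above. Since $h$ is constant on each $Q$ with value $\|f\chi_Q\|_X/\|\chi_Q\|_X$, one has the identity $\|h\chi_Q\|_X=\|f\chi_Q\|_X$, so property $G$ applied to $(h,g)$ rewrites the sum exactly as $\sum_Q\|h\chi_Q\|_X\|g\chi_Q\|_{X'}\leq C\|h\|_X\|g\|_{X'}$. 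Rearranging yields $\|f\|_X\leq C\|h\|_X$, i.e.\ the claimed lower bound with $C_1=1/C$.

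The main obstacle is spotting this self-referential substitution $f\mapsto h$ in the lower-bound step. Property $G$ is a hypothesis about partition-level bilinear interactions, so applying it naively to $f$ would yield $C\|f\|_X\|g\|_{X'}$ and collapse the argument to a tautology; the trick is that the piecewise-constant function $h$ has the \emph{same} blockwise norms as $f$ while encoding the quantity we want to estimate, which is exactly what is needed to replace the free $g$-dependence on the right-hand side by a single $X$-norm.
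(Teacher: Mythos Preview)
The paper does not supply its own proof of this theorem: it is quoted from \cite{K1} and used as a black box (notably in the proof of Theorem~\ref{theorem_BLO_G'_notG''}). So there is no in-paper argument to compare against.

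Your proof is correct. Both halves run on duality plus property $G$, and the lower-estimate trick of replacing $f$ by $h$ is exactly the right observation: since $h$ is constant on each $Q$ one has $\|h\chi_Q\|_X=\|f\chi_Q\|_X$, so property $G$ for the pair $(h,g)$ bounds the bilinear sum by $C\|h\|_X\|g\|_{X'}$ rather than the tautological $C\|f\|_X\|g\|_{X'}$. One minor remark on logical order: your argument implicitly uses $h\in X$ when invoking property $G$ in the lower estimate, and this is precisely what the upper estimate guarantees ($\|h\|_X\le C\|f\|_X<\infty$); since you prove the upper bound first there is no circularity, but it may be worth saying so explicitly.
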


\par Note that the (\ref{estim_upper_and_lower}) type inequalities is very important for
studying the boundedness properties of operators of harmonic analysis
in variable Lebesgue spaces (see \cite{CUF}, \cite{DHHR}).

\begin{defn}
\label{G'_G''_prop} We say that  BFS $X$ has property $G'_{}$
(property $G''$) if there exists constant $C_{1}$ ($C_{2}>0$) such
that for every $f\in X$ and $\mathcal{Q}\in \Im$ we have
\begin{equation}
\label{defn_prop_G'_G''}
\left\|\sum_{Q\in\mathcal{Q}}\frac{\|f\chi_{Q}\|_{X}}{\|\chi_{Q}\|_{X}}\chi_{Q}\right\|_{X}\leq
C_{1}\|f\|_{X},\,\,\,\,\left(\|f\|_{X}\leq
C_{2}\left\|\sum_{Q\in\mathcal{Q}}\frac{\|f\chi_{Q}\|_{X}}{\|\chi_{Q}\|_{X}}\chi_{Q}\right\|_{X}\right).
\end{equation}
\end{defn}

 The idea of (\ref{defn_prop_G'_G''}) type inequalities are to generalize the following property of the Lebesgue norm
$$
\|f\|_{L^{p}}=\left\|\sum_{i}\frac{\|f\chi_{\Omega_{i}}\|_{L^{p}}}{\|\chi_{\Omega_{i}}\|_{L^{p}}}\chi_{\Omega_{i}}\right\|_{L^{p}},
$$
where $\Omega_i$ is disjoint measurable partition of $\Omega$.

\par The aim of our paper is to investigate the properties $G'$ (property $G''$)
for variable Lebesgue spaces $L^{p(\cdot)}[0;1].$  By $\Im$ we
denote the family of all sequences (may be finite) $\{Q_{i}\}$ of
disjoint intervals from $[0;1]$. Assume that sets like $[0;a)$
and $(b;1]$ are also intervals. We have described the class  of exponents, for which the correspondent variable exponent Lebesgue spaces has
property $G'$ (property $G''$). Also we have constructed variable exponent
Lebesgue space with property $G'$ ($G''$), which does not possess $G''$ ($G'$) property.

\par Particularly we will proof following theorems:
\begin{thm}
\label{theorem_BLO_G'} Let for exponent $p(\cdot)$ we have
$1/p(\cdot)\in BLO^{1/\log},\,1\leq p_-\leq p_+<\infty$. Then the
space $L^{p(\cdot)}[0;1]$ has property $G'$.
\end{thm}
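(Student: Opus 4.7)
The plan is to prove the bound
\[
\|Tf\|_{p(\cdot)}\le C_{1}\|f\|_{p(\cdot)},\qquad Tf:=\sum_{Q\in\mathcal{Q}}\frac{\|f\chi_{Q}\|_{p(\cdot)}}{\|\chi_{Q}\|_{p(\cdot)}}\chi_{Q},
\]
by a direct modular estimate. By the positive homogeneity of $T$, it suffices to treat the normalized case $\|f\|_{p(\cdot)}=1$, in which the modular $\rho(f)=\int_{0}^{1}|f|^{p(x)}\,dx\le 1$ and each piece satisfies $\lambda_{Q}:=\|f\chi_{Q}\|_{p(\cdot)}\le 1$. Write $\mu_{Q}=\|\chi_{Q}\|_{p(\cdot)}$, $a_{Q}=\lambda_{Q}/\mu_{Q}$, and let $p_{Q}^{+},p_{Q}^{-}$ denote the essential supremum and infimum of $p$ on $Q$. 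The goal becomes to find a constant $C\ge 1$ with $\rho(Tf/C)\le 1$.

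The first and most substantive step is to translate the one-sided hypothesis $1/p(\cdot)\in BLO^{1/\log}$ into the two-sided comparison $\mu_{Q}\asymp|Q|^{1/p_{Q}^{+}}$. The upper bound $\mu_{Q}\le|Q|^{1/p_{Q}^{+}}$ is immediate from $\mu_{Q}\le 1$ together with $p(x)\le p_{Q}^{+}$ and the defining identity $\int_{Q}\mu_{Q}^{-p(x)}\,dx=1$. For the matching lower bound I would apply Jensen's inequality to the convex function $t\mapsto\mu_{Q}^{-t}$ to obtain $\mu_{Q}\ge|Q|^{1/\overline{p}_{Q}}$, where $\overline{p}_{Q}=|Q|^{-1}\int_{Q}p$, and then use convexity of $1/t$ combined with $BLO^{1/\log}$ in the form
\[
\frac{1}{\overline{p}_{Q}}-\frac{1}{p_{Q}^{+}}\le\overline{(1/p)}_{Q}-\frac{1}{p_{Q}^{+}}\le\frac{K}{\log(1/|Q|)}
\]
to conclude $|Q|^{1/\overline{p}_{Q}-1/p_{Q}^{+}}\ge e^{-K}$, and hence $\mu_{Q}\ge e^{-K}|Q|^{1/p_{Q}^{+}}$ uniformly in $Q$.

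Once this comparison is in hand, the defining identity $\int_{Q}(|f|/\lambda_{Q})^{p(x)}\,dx=1$ together with $\lambda_{Q}\le 1$ and $\lambda_{Q}^{p(x)}\ge\lambda_{Q}^{p_{Q}^{+}}$ yields $\lambda_{Q}^{p_{Q}^{+}}\le\rho_{Q}(f):=\int_{Q}|f|^{p(x)}\,dx$. From this I would extract the two summable estimates
\[
|Q|\,a_{Q}^{p_{Q}^{+}}\le e^{Kp_{+}}\rho_{Q}(f),\qquad |Q|\,a_{Q}^{p_{Q}^{-}}\le e^{Kp_{+}}\bigl(|Q|+\rho_{Q}(f)\bigr),
\]
the second via the weighted AM--GM inequality $|Q|^{1-s}\rho_{Q}(f)^{s}\le (1-s)|Q|+s\rho_{Q}(f)$ with $s=p_{Q}^{-}/p_{Q}^{+}\in(0,1]$. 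Combining the pointwise inequality $(a_{Q}/C)^{p(x)}\le(a_{Q}/C)^{p_{Q}^{-}}+(a_{Q}/C)^{p_{Q}^{+}}$ (valid for $p(x)\in[p_{Q}^{-},p_{Q}^{+}]$) with $C^{-p_{Q}^{\pm}}\le C^{-p_{-}}$ and summing, using $\sum_{Q}|Q|=1$ and $\rho(f)\le 1$, I would obtain
\[
\rho(Tf/C)\le\frac{3e^{Kp_{+}}}{C^{p_{-}}},
\]
so the choice $C=(3e^{Kp_{+}})^{1/p_{-}}$ forces $\rho(Tf/C)\le 1$ and delivers $\|Tf\|_{p(\cdot)}\le C_{1}\|f\|_{p(\cdot)}$.

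The main obstacle will be the first step: the quantitative translation of the one-sided $BLO^{1/\log}$ control of $1/p$ into the two-sided size of $\|\chi_{Q}\|_{p(\cdot)}$. The $1/\log(1/|Q|)$ decay in the hypothesis is exactly what is required to balance the logarithm appearing in $|Q|^{1/\overline{p}_{Q}-1/p_{Q}^{+}}$, and this delicate balance is really the whole content of the regularity assumption. After the comparison $\mu_{Q}\asymp|Q|^{1/p_{Q}^{+}}$ is secured, the rest reduces to straightforward modular bookkeeping exploiting only elementary inequalities for variable Lebesgue norms.
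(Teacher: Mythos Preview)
Your argument is correct and follows the same overall strategy as the paper: normalize to $\|f\|_{p(\cdot)}=1$, establish the key two-sided comparison $\|\chi_{Q}\|_{p(\cdot)}\asymp|Q|^{1/p_{+}(Q)}$ from the $BLO^{1/\log}$ hypothesis, use $\|f\chi_{Q}\|_{p(\cdot)}^{p_{+}(Q)}\le\rho_{Q}(f)$, and finish by a modular estimate. The execution differs in two places. For the lower bound on $\|\chi_{Q}\|_{p(\cdot)}$, the paper invokes Diening's lemma $\frac{1}{|Q|}\int_{Q}t^{p(x)}dx\ge e^{2(p_{-}(Q)-p_{+}(Q))}t^{\overline{p}_{Q}}$ with the \emph{harmonic} mean $\overline{p}_{Q}$, whereas you use a direct Jensen step with the \emph{arithmetic} mean of $p$ and then compare the two means via convexity of $1/t$; your route is more self-contained. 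For the endgame, the paper introduces the auxiliary piecewise-constant exponent $\tilde{p}(x)=p_{+}(Q_{i})$ on $Q_{i}$, computes the $\tilde{p}(\cdot)$-modular of $Tf$ directly as $\sum_{i}|Q_{i}|a_{Q_{i}}^{p_{+}(Q_{i})}\le C^{-1}\sum_{i}\rho_{Q_{i}}(f)$, and then uses the embedding $L^{\tilde{p}(\cdot)}\hookrightarrow L^{p(\cdot)}$ (Proposition on monotone exponents). This avoids your pointwise splitting $(a_{Q}/C)^{p(x)}\le(a_{Q}/C)^{p_{Q}^{-}}+(a_{Q}/C)^{p_{Q}^{+}}$ and the subsequent AM--GM treatment of the $p_{Q}^{-}$ term entirely, yielding a shorter finish; your version is a valid alternative but does more work than necessary. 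One cosmetic point: the $BLO^{1/\log}$ modulus is controlled by $C/\log(e+1/|Q|)$, not $C/\log(1/|Q|)$; using the former makes your bound $|Q|^{1/\overline{p}_{Q}-1/p_{Q}^{+}}\ge e^{-K}$ uniform over all $|Q|\in(0,1]$ without a separate case for $|Q|$ near $1$.
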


\begin{thm}
\label{theorem_BLO+C+G''} Let for exponent $p(\cdot),1\leq p_-\leq
p_+<\infty$ we have $1/p(\cdot)\in BLO^{1/\log}$. Then there exists
$c$ such that the space $L^{(p(\cdot)+c)'}[0;1]$ has property
$G''$.
\end{thm}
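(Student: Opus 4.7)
My plan is to deduce property $G''$ for $X := L^{(p(\cdot)+c)'}[0;1]$ from property $G'$ for its associate space $X' \cong L^{p(\cdot)+c}[0;1]$. The starting point is the duality identity $\|f\|_X \approx \sup_{\|g\|_{X'}\le 1}\int_0^1 fg\,dx$ combined with H\"older's inequality applied cell-by-cell: for any partition $\mathcal{Q}=\{Q\}$,
\[
\int_0^1|fg|\,dx \le \sum_Q \|f\chi_Q\|_X\|g\chi_Q\|_{X'}.
\]
The aim is to dominate the right-hand side by $C\|F\|_X\|g\|_{X'}$ with $F:=\sum_Q\tfrac{\|f\chi_Q\|_X}{\|\chi_Q\|_X}\chi_Q$; taking the supremum over $g$ would then give $G''$ for $X$.

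I would first choose $c$ large enough so that the shifted exponent $q(\cdot):=p(\cdot)+c$ satisfies the hypotheses of Theorem \ref{theorem_BLO_G'}. The bounds $1\le q_-\le q_+<\infty$ are automatic. For $1/q\in BLO^{1/\log}$, note that $1/q(x)=\phi(1/p(x))$ where $\phi(t)=t/(1+ct)$ is increasing and $1$-Lipschitz on $[0,\infty)$; since composition with an increasing continuous function commutes with essential infimum over $Q$, one has $(1/q)-\operatorname{ess\,inf}_Q(1/q)\le (1/p)-\operatorname{ess\,inf}_Q(1/p)$ pointwise, and averaging preserves the $BLO^{1/\log}$ bound. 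Theorem \ref{theorem_BLO_G'} then yields property $G'$ for $L^{q(\cdot)}=X'$.

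Next I would factor
\[
\|f\chi_Q\|_X\|g\chi_Q\|_{X'}=\frac{\|f\chi_Q\|_X}{\|\chi_Q\|_X}\cdot\frac{\|g\chi_Q\|_{X'}}{\|\chi_Q\|_{X'}}\cdot\|\chi_Q\|_X\|\chi_Q\|_{X'},
\]
and invoke the product estimate $\|\chi_Q\|_X\|\chi_Q\|_{X'}\le C|Q|$ to convert the summed bound into $C\int_0^1 FG\,dx$, where $G:=\sum_Q\tfrac{\|g\chi_Q\|_{X'}}{\|\chi_Q\|_{X'}}\chi_Q$. H\"older's inequality in the pair $(X,X')$ then gives $\int FG\le \|F\|_X\|G\|_{X'}$, and the $G'$ inequality established above controls $\|G\|_{X'}\le C\|g\|_{X'}$. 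Combining these and taking the supremum over $\|g\|_{X'}\le 1$ produces $\|f\|_X\le C\|F\|_X$, i.e.\ property $G''$ for $X$.

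The core technical step is the product estimate $\|\chi_Q\|_{q(\cdot)}\|\chi_Q\|_{q'(\cdot)}\le C|Q|$ uniformly over intervals $Q\subset[0,1]$; this is what forces us to shift by a constant $c$. Taking $c$ large guarantees $q_->1$ and $q'_+<\infty$, so that neither exponent is degenerate, and combined with $1/q\in BLO^{1/\log}$ (and hence, by the same Lipschitz-composition argument, $1/q'\in BLO^{1/\log}$) the required comparison reduces to a modular-level estimate on $\chi_Q$, showing essentially that both norms behave like suitable local-mean powers of $|Q|$. For $p(\cdot)$ itself such a product estimate may fail when $p_-=1$, because then $p'$ is unbounded, which is the structural reason the theorem is phrased for $p(\cdot)+c$ rather than for $p(\cdot)$.
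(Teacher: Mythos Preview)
Your overall strategy coincides with the paper's: establish property~$G'$ for $X'=L^{p(\cdot)+c}$ via Theorem~\ref{theorem_BLO_G'}, then pass to property~$G''$ for $X=L^{(p(\cdot)+c)'}$ by the duality argument you sketch, which is exactly Theorem~\ref{theorem_G'_and_condition_A_imply_G''}. Your verification that $1/q\in BLO^{1/\log}$ for $q=p+c$, using the increasing $1$-Lipschitz map $\phi(t)=t/(1+ct)$, is also correct.

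The gap is in your justification of condition~$A$, the product estimate $\|\chi_Q\|_{q(\cdot)}\|\chi_Q\|_{q'(\cdot)}\le C|Q|$. You assert that $1/q'\in BLO^{1/\log}$ ``by the same Lipschitz-composition argument'', but $1/q'=1-1/q=\psi(1/p)$ with $\psi(t)=1-t/(1+ct)$, and $\psi$ is \emph{decreasing}. A decreasing composition sends essential infima to essential suprema, so the $BLO$ modulus of $1/q'$ equals $\operatorname{ess\,sup}_Q(1/q)-(1/q)_Q$, which is \emph{not} controlled by the hypothesis $1/q\in BLO^{1/\log}$; the class $BLO$ is genuinely one-sided and not preserved under $t\mapsto -t$. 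Hence taking $c$ large to make $q_->1$ is not by itself enough to obtain condition~$A$. The paper closes this gap by a different route: since $1/p\in BLO^{1/\log}\subset BMO^{1/\log}$ and $p$ is bounded, one has $p\in BMO^{1/\log}$, and Theorem~\ref{theorem_BMO_BLO_VLO}(3) then furnishes a specific constant $c$ with $p(\cdot)+c\in\mathcal{B}$. Boundedness of the Hardy--Littlewood maximal operator on $L^{p(\cdot)+c}$ yields uniform boundedness of the averaging operators and hence condition~$A$ (cf.\ \cite{Be2}). With condition~$A$ secured in this way, your duality argument (equivalently Theorem~\ref{theorem_G'_and_condition_A_imply_G''}) completes the proof.
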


\begin{thm}
\label{theorem_BLO_G'_notG''} \textsc{1)} There exists exponent $p(\cdot),1\leq
p_-\leq p_+<\infty$ such that $1/p(\cdot)\in BLO^{1/\log}$ and
$L^{p(\cdot)}[0;1]$ has property $G'$ but does not have property $G''$.
\\\textsc{2)} There exists exponent $p(\cdot),1\leq
p_-\leq p_+<\infty$ such that $1/p(\cdot)\in BLO^{1/\log}$ and
$L^{p(\cdot)}[0;1]$ has property $G''$ but does not have property $G'$.
\end{thm}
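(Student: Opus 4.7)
For each part the positive assertion is immediate: in part~(1), Theorem~\ref{theorem_BLO_G'} gives $G'$ as soon as $1/p(\cdot)\in BLO^{1/\log}$, while in part~(2), Theorem~\ref{theorem_BLO+C+G''} gives $G''$ for a suitably shifted $p(\cdot)$. The substance of the proof is therefore the construction, in each case, of a specific exponent and a sequence of test functions $f_{N}$ and partitions $\mathcal{Q}_{N}\in\Im$ along which the ratio that would define the missing constant tends to infinity.

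For part~(1), my plan is to take $p(\cdot)$ piecewise constant on a sequence of dyadic intervals $I_{k}\subset[0;1]$ of lengths $\sim 2^{-k}$, alternating between two values $p_{-}$ and $p_{-}+\delta_{k}$ with $\delta_{k}\sim 1/k$. This is precisely the critical oscillation rate for $1/p(\cdot)$ to lie in $BLO^{1/\log}$, so Theorem~\ref{theorem_BLO_G'} secures property $G'$. To violate $G''$, the test function $f_{N}$ is chosen as a characteristic function of a thin subset of the region where $p(\cdot)=p_{-}+\delta_{k}$, and $\mathcal{Q}_{N}$ is taken coarse enough that each $Q\in\mathcal{Q}_{N}$ contains many oscillation intervals of $p(\cdot)$. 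A direct modular computation then shows that the local ratios $\|f_{N}\chi_{Q}\|_{p(\cdot)}/\|\chi_{Q}\|_{p(\cdot)}$ are effectively governed by $p_{-}$, whereas the global norm $\|f_{N}\|_{p(\cdot)}$ sees the concentration at exponent $p_{-}+\delta_{k}$; the resulting gap grows without bound as $N\to\infty$.

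For part~(2) I apply the analogous construction to build $p(\cdot)$ and then work in $X=L^{(p(\cdot)+c)'}[0;1]$, where $c$ is the constant from Theorem~\ref{theorem_BLO+C+G''}, which automatically has $G''$. To refute $G'$ I swap the roles of the two exponents: the test function $g_{N}$ is now supported on the region where the conjugate exponent is larger, so that the summation over each $Q\in\mathcal{Q}_{N}$ averages both values and inflates the discretized norm $\|\sum_{Q}\chi_{Q}\|g_{N}\chi_{Q}\|_{X}/\|\chi_{Q}\|_{X}\|_{X}$ far above $\|g_{N}\|_{X}$. The modular bookkeeping parallels part~(1), using the Hölder-type duality relation $\|\chi_{Q}\|_{p(\cdot)}\cdot\|\chi_{Q}\|_{p'(\cdot)}\approx|Q|$ to transport local-norm estimates into the associate space.

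The main technical obstacle I anticipate is producing estimates of $\|\chi_{Q}\|_{p(\cdot)}$ and $\|f_{N}\chi_{Q}\|_{p(\cdot)}$ that are sharp enough, on intervals across which $p(\cdot)$ itself jumps, to yield an unbounded ratio, while simultaneously keeping $1/p(\cdot)$ inside $BLO^{1/\log}$. The oscillation $\delta_{k}\sim 1/k$ is right at the boundary: any faster decay makes the gap bounded, any slower ruins $BLO^{1/\log}$ membership. Matching these two constraints through careful two-sided modular estimates, and verifying in particular that the lower oscillation over intermediate scales stays $\lesssim 1/\log$, is where the bulk of the work will lie.
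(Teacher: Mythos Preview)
Your construction has a structural obstruction at the very first step: a piecewise constant exponent cannot satisfy $1/p(\cdot)\in BLO^{1/\log}$. If $1/p$ has a genuine jump of height $h>0$ at some point, then for every interval $Q$ straddling that point one has $(1/p)_{Q}-\mathop{\mathrm{essinf}}_{Q}(1/p)\gtrsim h$ independently of $|Q|$; since the $BLO^{1/\log}$ condition demands $\eta(1/p,r)\le C/\log(e+1/r)\to 0$ as $r\to 0$, membership in $BLO^{1/\log}$ forces continuity. Thus your dyadic-block exponent alternating between $p_{-}$ and $p_{-}+\delta_{k}$ lies outside the class from the outset, so neither Theorem~\ref{theorem_BLO_G'} nor Theorem~\ref{theorem_BLO+C+G''} is available and the positive half of each part collapses. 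The paper circumvents this by building $1/p$ from shifted, reflected pieces of the \emph{continuous} function $\ln\ln(1/x)$: it first proves $\ln\ln(1/x)\in BLO^{1/\log}$ on $(0,e^{-1}]$, then assembles from these pieces a bounded continuous zigzag $g\in BLO^{1/\log}$ oscillating between $0$ and $1$ on infinitely many intervals accumulating at $0$, and finally sets $1/p=\min(b,\max(a,g))$ for levels $0<a<b<1$ with $a+b<1$, producing flat constant-exponent plateaus $A=\{g\le a\}$ and $B=\{g\ge b\}$ joined continuously.

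The mechanism for violating $G''$ is also different from your sketch, and your heuristic is shaky: under $1/p(\cdot)\in BLO^{1/\log}$ one has $\|\chi_{Q}\|_{p(\cdot)}\asymp|Q|^{1/p_{+}(Q)}$ by \eqref{estim_chi_norm_uper_lover}, so the local ratio $\|f\chi_{Q}\|_{p(\cdot)}/\|\chi_{Q}\|_{p(\cdot)}$ is governed by $p_{+}(Q)$, not $p_{-}$, and with $f$ supported precisely where $p=p_{+}$ the gap you describe does not arise. Rather than attacking the $G''$ inequality directly, the paper shows that property $G$ for the pair $(L^{p(\cdot)},L^{p(\cdot)})$ fails: taking characteristic functions $f_{k}$, $g_{k}$ supported on equal-length subintervals of the plateaus $A$ and $B$ inside each $\Delta_{n}$, the bilinear sum in Definition~\ref{G_prop} grows like $k$ times a power of $\delta_{k}$ while the product of global norms grows like the same power of $k\delta_{k}$, yielding a ratio $k^{\alpha}\to\infty$ for an appropriate $\alpha>0$. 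Since $G'$ holds by Theorem~\ref{theorem_BLO_G'} and $G'\wedge G''$ would force $G$ via Theorems~\ref{theorem_equval_G_l_estimates} and~\ref{theorem_G_imply_G'_G''}, $G''$ must fail. Part~(2) then follows by passing to $(p(\cdot)+c)'$ and invoking Theorems~\ref{theorem_BMO_BLO_VLO} and~\ref{theorem_G'_and_condition_A_imply_G''} for $G''$, together with the failure of $G''$ for $L^{p(\cdot)+c}$ to rule out $G'$.
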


\section{Some remarks on properties $G'$ and $G''$}

In this section we will discuss about relations between $G'$ and $G''$ properties for BFS $X$ and its associate space. $\Im$ denotes the family of all sequences of disjoint intervals. 
\begin{defn}[\cite{Be2}]
Let $X$ be a BFS. We say that for BFS $X$ is fulfilled condition $A$
if there exists constant $C>0$ such that, for all interval
$Q\subset[0;1]$
$$
\|\chi_{Q}\|_{X}\cdot\|\chi_{Q}\|_{X'}\leq C\cdot |Q|
$$
\end{defn}

\begin{thm}
\label{theorem_G'_and_condition_A_imply_G''}
Let BFS $X$ has property $G'$ and for $X$ fulfilled condition $A.$
Then associate space of $X$ has property $G''.$
\end{thm}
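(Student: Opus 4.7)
The plan is to exploit the duality $\|g\|_{X'}=\sup\{\int|fg|:\|f\|_X\le 1\}$, decompose the integral along an arbitrary partition $\mathcal{Q}\in\Im$, apply generalized Hölder on each $Q$, and then use condition $A$ to convert the resulting product of local norms back into an integral over $[0;1]$ of the two averaged step functions. The averaging on the $X$-side will be absorbed by property $G'$.

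\medskip

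\noindent\textbf{Step 1 (setup).} Fix $g\in X'$, $\mathcal{Q}\in\Im$, and $f\in X$ with $\|f\|_X\le 1$. Set
$$F=\sum_{Q\in\mathcal{Q}}\frac{\|f\chi_Q\|_X}{\|\chi_Q\|_X}\chi_Q,\qquad G=\sum_{Q\in\mathcal{Q}}\frac{\|g\chi_Q\|_{X'}}{\|\chi_Q\|_{X'}}\chi_Q.$$
From property $G'$ applied to $f$ we obtain $\|F\|_X\le C_1\|f\|_X\le C_1$.

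\medskip

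\noindent\textbf{Step 2 (local Hölder and partition sum).} Since the $Q$'s are disjoint and cover $[0;1]$, the generalized Hölder inequality yields
$$\int_{0}^{1}|fg|\,dx=\sum_{Q\in\mathcal{Q}}\int_{Q}|fg|\,dx\le\sum_{Q\in\mathcal{Q}}\|f\chi_Q\|_X\,\|g\chi_Q\|_{X'}.$$

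\medskip

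\noindent\textbf{Step 3 (condition $A$ as the bridge).} This is the crux. Direct computation gives
$$\int_{0}^{1}FG\,dx=\sum_{Q\in\mathcal{Q}}\frac{\|f\chi_Q\|_X\,\|g\chi_Q\|_{X'}}{\|\chi_Q\|_X\,\|\chi_Q\|_{X'}}\,|Q|,$$
and condition $A$ gives $|Q|\ge C^{-1}\|\chi_Q\|_X\|\chi_Q\|_{X'}$, so
$$\sum_{Q\in\mathcal{Q}}\|f\chi_Q\|_X\,\|g\chi_Q\|_{X'}\le C\int_{0}^{1}FG\,dx\le C\,\|F\|_X\,\|G\|_{X'}\le CC_1\,\|G\|_{X'}.$$

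\medskip

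\noindent\textbf{Step 4 (take the supremum).} Combining Steps 2 and 3 and using $\|g\|_{X'}=\sup_{\|f\|_X\le 1}\int|fg|$ gives
$$\|g\|_{X'}\le CC_1\left\|\sum_{Q\in\mathcal{Q}}\frac{\|g\chi_Q\|_{X'}}{\|\chi_Q\|_{X'}}\chi_Q\right\|_{X'},$$
which is property $G''$ for $X'$.

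\medskip

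\noindent\textbf{Main obstacle.} There is no deep obstacle; the decisive trick is recognizing that condition $A$ is precisely what converts $\sum_Q\|f\chi_Q\|_X\|g\chi_Q\|_{X'}$ into an integral $\int FG$, after which $G'$ for $X$ and Hölder close the estimate in a single line. The argument is entirely formal and uses neither the structure of intervals on $[0;1]$ nor any property of variable exponent spaces, so the statement is really a soft duality fact about Banach function spaces.
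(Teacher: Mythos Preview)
Your proof is correct and follows essentially the same route as the paper's own argument: fix $f\in X$ with $\|f\|_X\le 1$, split $\int|fg|$ along $\mathcal{Q}$, apply H\"older on each $Q$, use condition $A$ to rewrite $\sum_Q\|f\chi_Q\|_X\|g\chi_Q\|_{X'}$ as (a constant times) $\int FG$, bound this by $\|F\|_X\|G\|_{X'}$ via H\"older, and absorb $\|F\|_X$ with property $G'$. Your introduction of the names $F$ and $G$ makes the presentation slightly cleaner than in the paper, but the logic is identical.
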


\begin{proof}
Let $\mathcal{Q}=\{Q_1,Q_2,...\}$ denotes some partition of
$[0;1].$  Let $g\in X'$ and $f\in X$ such that $\|f\|_{X}\leq1.$
Using H\"{o}lders inequality and $A$ condition we conclude that
$|Q|\asymp\|\chi_{Q}\|_{X}\cdot\|\chi_{Q}\|_{X'}.$ Using this fact and
property $G'$ we obtain
$$
\int_{[0;1]}|f(x)g(x)|dx=\sum_{k}\int_{Q_{k}}|f(x)g(x)|dx\leq\sum_{k}\|f\chi_{Q}\|_{X}\cdot\|g\chi_{Q}\|_{X'}
$$
$$
\leq C_1\int\limits_{[0;1]}\sum_{k}\frac{\|f\chi_{Q}\|_{X}}{\|\chi_{Q}\|_{X}}\frac{\|g\chi_{Q}\|_{X'}}{\|\chi_{Q}\|_{X'}}\chi_{Q_{k}}dx
\leq C_1\left\|\sum_{k}\frac{\|f\chi_{Q}\|_{X}}{\|\chi_{Q}\|_{X}}\chi_{Q_{k}}\right\|_{X}
\left\|\sum_{k}\frac{\|g\chi_{Q}\|_{X'}}{\|\chi_{Q}\|_{X'}}\chi_{Q_{k}}\right\|_{X'}
$$
$$
\leq
C_2\left\|\sum_{k}\frac{\|g\chi_{Q}\|_{X'}}{\|\chi_{Q}\|_{X'}}\chi_{Q_{k}}\right\|_{X'}.
$$

Consequently $X'$ possess $G''$ property. 
\end{proof}

\par Note that if for BFS $X$ we have property $G$ then for $X'$ we have property $G$ without condition $A$ (see \cite{K1}).

\begin{defn}
Let $\mathcal{Q}\in\Im$. We define averaging operator with respect to $\mathcal{Q}$ by
$$
T_{\mathcal{Q}}f(x)=\sum_i |f|_{Q_i}\chi_{Q_i}(x)
$$
\end{defn}
where $|f|_{Q}$ denotes the average of $|f|$ on $Q$.
\begin{thm}
Let BFS $X$ has property $G''$ and averaging operators $T_{\mathcal{Q}}:X\to X$, $\mathcal{Q}\in\Im$ are uniformly bounded. Then associate space of $X$ has property $G'.$
\end{thm}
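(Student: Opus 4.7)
The approach is duality. Set $h := \sum_{Q\in\mathcal{Q}} (\|g\chi_Q\|_{X'}/\|\chi_Q\|_{X'})\chi_Q$ and assume without loss of generality $g\geq 0$. Since $(X')'=X$, one has $\|h\|_{X'}=\sup\{\int fh : f\geq 0,\,\|f\|_X\leq 1\}$, so the plan is, for each admissible $f$, to manufacture a companion $\tilde f\in X$ with $\|\tilde f\|_X$ controlled by a uniform constant and $\int\tilde f g$ essentially equal to $\int fh$; then H\"older finishes the job.

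To construct $\tilde f$, fix $\eps>0$. For every $Q\in\mathcal{Q}$, use the definition of the associate norm to pick $\phi_Q\geq 0$ supported on $Q$ with $\|\phi_Q\|_X\leq 1$ and $\int\phi_Q g\geq(1-\eps)\|g\chi_Q\|_{X'}$; put
\[
\tilde f:=\sum_{Q\in\mathcal{Q}}|f|_Q\,\frac{|Q|}{\|\chi_Q\|_{X'}}\,\phi_Q.
\]
The coefficient is designed precisely so that $\int\tilde f g\geq(1-\eps)\sum_Q|f|_Q|Q|\,(\|g\chi_Q\|_{X'}/\|\chi_Q\|_{X'})=(1-\eps)\int fh$. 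For the $X$-norm of $\tilde f$, disjointness of supports gives $\|\tilde f\chi_Q\|_X\leq |f|_Q|Q|/\|\chi_Q\|_{X'}$, and H\"older's inequality $|Q|\leq\|\chi_Q\|_X\|\chi_Q\|_{X'}$ then yields $\|\tilde f\chi_Q\|_X/\|\chi_Q\|_X\leq|f|_Q$. Hence $\sum_Q(\|\tilde f\chi_Q\|_X/\|\chi_Q\|_X)\chi_Q\leq T_{\mathcal{Q}}f$ pointwise, and monotonicity of $\|\cdot\|_X$ together with the uniform boundedness of $T_\mathcal{Q}$ on $X$ bounds the $X$-norm of the left-hand side by a constant. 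Property $G''$ upgrades this to $\|\tilde f\|_X\leq C$ uniformly in $f$, $\mathcal{Q}$, and $\eps$. Finally H\"older gives $(1-\eps)\int fh\leq\int\tilde f g\leq C\|g\|_{X'}$; letting $\eps\to 0$ and taking the supremum over $f$ concludes the proof.

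The main design decision is the precise weight $|f|_Q|Q|/\|\chi_Q\|_{X'}$ in front of $\phi_Q$: it must simultaneously (i) after pairing with $g$ via $\phi_Q$, reproduce the quantity $\int fh$, and (ii) make $\|\tilde f\chi_Q\|_X/\|\chi_Q\|_X$ no larger than the Lebesgue average $|f|_Q$, so that the uniform boundedness of $T_\mathcal{Q}$ becomes the operative hypothesis. H\"older's inequality for BFSs reconciles these two requirements, after which property $G''$ is the mechanism converting control of the $X$-averages of $\tilde f$ into control of $\|\tilde f\|_X$ itself.
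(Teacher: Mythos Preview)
Your proof is correct and follows essentially the same duality strategy as the paper: pair the block sum against a test function, replace each $\|g\chi_Q\|_{X'}$ by a near-extremizer $\int\phi_Q g$, reorganize into $\int g\cdot\tilde f$, and then control $\|\tilde f\|_X$ via property $G''$ together with the uniform bound on $T_{\mathcal{Q}}$. The paper's argument is the same computation written inline rather than through an explicitly named $\tilde f$; your $\phi_Q$ is the paper's $h_i$ and your $\tilde f$ is exactly the function the paper bounds in $X$-norm. One minor remark: the paper invokes condition $A$ (deduced from the uniform boundedness of $T_{\mathcal{Q}}$) at this step, but as your write-up shows, only the H\"older direction $|Q|\leq\|\chi_Q\|_X\|\chi_Q\|_{X'}$ is actually needed.
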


\begin{proof}
Let $g\in X$ is nonnegative function such that $||g||_{X}\leq1$. For any $\varepsilon>0$ and $i$ we choose nonnegative function $h_i\in X$ such that $||h_i||_X\leq1$ and $||f\chi_{Q_i}||_{X'}\leq(1+\varepsilon)\int_{Q_i}fh_i$. Note that uniformly boundedness of averaging operator implies condition $A$ for space $X$ (see \cite{Be2}). So by property $G''$ and H\"{o}lder inequality we get
$$
\int\limits_{[0;1]}g(x)\sum_i\frac{||f\chi_{Q_i}||_{X'}}{||\chi_{Q_i}||_{X'}}\chi_{Q_i}(x)dx\leq\int\limits_{[0;1]}g(x)\sum_i\frac{(1+\varepsilon)\int_{Q_i}f(t)h_i(t)dt}{||\chi_{Q_i}||_{X'}}\chi_{Q_i}(x)dx
$$
$$
=(1+\varepsilon)\int_{[0;1]}f(t)\sum_i\frac{h_i(t)\int_{Q_i}g(x)dx}{||\chi_{Q_i}||_{X'}}dt\leq
(1+\varepsilon)||f||_{X'}\left\|\sum_i\frac{h_i(\cdot)\int_{Q_i}g(x)dx}{||\chi_{Q_i}||_{X'}}\chi_{Q_i}\right\|_{X}
$$
$$
\leq(1+\varepsilon)||f||_{X'}\left\|\sum_i\frac{\|h_i\|_{X}\int_{Q_i}g(x)dx}{||\chi_{Q_i}||_{X}||\chi_{Q_i}||_{X'}}\chi_{Q_i}\right\|_X\leq C_1(1+\varepsilon)||f||_{X'}\left\|\sum_i\frac{\chi_{Q_i}}{|Q_i|}\int_{Q_i}g(x)dx\right\|_X
$$
$$
\leq C_2(1+\varepsilon)||f||_{X'}||g||_{X}\leq C_2(1+\varepsilon)||f||_{X'}.
$$
By the fact that $\varepsilon$ is arbitrary small we conclude that $X'$ has property $G'$.
\end{proof}

Note that if $0<r<\infty$ then for any $f\in X$ we have
$\|f\|_{X}=\|f^{1/r}\|_{X^{r}}^{r}$ and the inequalities in definition \ref{G'_G''_prop} can be written in following form
$$
\left\|\sum_{Q\in\mathcal{Q}}\frac{\|f^{1/r}\chi_{Q}\|_{X^{r}}}{\|\chi_{Q}\|_{X^{r}}}\chi_{Q}\right\|_{X^{r}}^{r}\leq
C_{1}\|f^{1/r}\|_{X^{r}}^{r},
$$
$$
\|f^{1/r}\|_{X^{r}}^{r}\leq
C_{2}\left\|\sum_{Q\in\mathcal{Q}}\frac{\|f^{1/r}\chi_{Q}\|_{X^{r}}}{\|\chi_{Q}\|_{X^{r}}}\chi_{Q}\right\|_{X^{r}}^{r}.
$$
Consequently if BFS has property $G'$ ($G''$), then the
"norms" $\|\cdot\|_{X^{r}}\, (0<r<\infty)$ have property
$G'$ ($G''$).

\section{Variable Lebesgue spaces}

The variable exponent Lebesgue spaces $L^{p(\cdot)}(\mathbb{R}^{n})$
and the corresponding variable exponent Sobolev spaces
$W^{k,p(\cdot)}$ are of interest for their applications to the
problems in  fluid dynamics,  partial differential equations with
non-standard growth conditions, calculus of variations,  image
processing and etc (see \cite{DHHR}).

Given a measurable function
$p:[0;1]\rightarrow[1;+\infty),\,\,L^{p(\cdot)}[0;1]$ denotes
the set of measurable functions $f$ on $[0;1]$ such that for some
$\lambda>0$
$$
\int_{[0;1]}\left(\frac{|f(x)|}{\lambda}\right)^{p(x)}dx<\infty.
$$
This set becomes a Banach function spaces when equipped with the
norm
$$
\|f\|_{p(\cdot)}=\inf\left\{\lambda>0:\,\,
\int_{[0;1]}\left(\frac{|f(x)|}{\lambda}\right)^{p(x)}dx\leq1\right\}.
$$
\par For the given $p(\cdot),$ the conjugate exponent $p'(\cdot)$ is defined pointwise $p'(x)=p(x)/
(p(x)-1)$, $x\in[0;1].$ Given a set $Q\subset[0;1]$ we define
 some standard notations:
$$
p_-(Q):=\mathop{\mbox{essinf}}\limits_{x\in Q}p(x),\:\:\:\:
p_+(Q):=\mathop{\mbox{esssup}}\limits_{x\in Q}
p(x),\:\:\:\:p_-:=p_-([0;1]),\:\:\:\: p_+:=p_+([0;1]).
$$
In the notation introduced above, an exponent
$p(\cdot),\,\,1\leq p_{-}\leq p_{+}<\infty,$ the associate space of
$L^{p(\cdot)}[0;1]$ contains measurable functions $f$ such that
$$
\|f\|_{(L^{p(\cdot)})'}'=\sup\left\{\int_{[0;1]}|f(x)g(x)|dx\,:\,g\in
L^{p(\cdot)}[0;1],\,\|g\|_{p(\cdot)}\leq1\right\}<\infty.
$$
Note that in this case the associate space of $L^{p(\cdot)}[0;1]$ is
equal to $L^{p'(\cdot)}[0;1],$ and $\|\cdot\|_{(L^{p(\cdot)})'}'$ and $\|\cdot\|_{p'(\cdot)}$ are equivalent norms (see \cite{CUF}, \cite{DHHR}). We have also
$$
\int_{[0;1]}|f(x)g(x)|dx\leq
C\|f\|_{p(\cdot)}\|g\|_{p'(\cdot)},\,\,\,f\in L^{p(\cdot)}[0;1],\,\,g\in
L^{p'(\cdot)}[0;1].
$$
Conversely for all $f\in L^{p(\cdot)}[0;1]$
$$
\|f\|_{p(\cdot)}\leq C\sup\int_{[0;1]}|f(x)g(x)|dx,
$$
where the supremum is taken over all $g\in L^{p'(\cdot)}[0;1]$ such
that $\|g\|_{p'(\cdot)}\leq 1.$

 Given exponent $p(\cdot),\,1\leq p_{-}\leq p_{+}<\infty$
and a Lebesgue measurable function $f$ define the modular associated
with $p(\cdot)$ on the set $E\subset[0;1]$ by
$$
\rho_{p(\cdot),E}f=\int_{E}|f(x)|^{p(x)}dx.
$$
In case of constant exponents, the $L^{p}$ norm and the modular
differ only by an exponent. In the variable Lebesgue spaces their
relationship is more subtle as the next result shows
(see \cite{CUF}, \cite{DHHR}).
\begin{prop}
\label{proposition_modular_in_var}
Given exponent $p(\cdot),$ suppose $1\leq p_{-}\leq p_{+}<\infty.$
Let $E$ measurable subset of $[0;1].$ Then:

$(1)$ $\|f\chi_{E}\|_{p(\cdot)}=1$ if and only if
$\rho_{p(\cdot),E}f=1$;

$(2)$ if $\rho_{p(\cdot),E}f\leq C,$ then
$\|f\chi_{E}\|_{p(\cdot)}\leq \max(C^{1/p_-(E)},\,C^{1/p_+(E)});$

$(3)$ if $\|f\|_{p(\cdot)}\leq C,$ then
$\rho_{p(\cdot),E}f\leq\max(C^{p_+(E)},C^{p_-(E)}).$
\end{prop}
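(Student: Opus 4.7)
The proposition is the standard relationship between the Luxemburg norm and the modular in variable Lebesgue spaces, so the plan is to exploit the continuity/monotonicity of the one-variable function $\lambda\mapsto\rho(\lambda):=\rho_{p(\cdot),E}(f/\lambda)=\int_E(|f(x)|/\lambda)^{p(x)}\,dx$. The boundedness assumption $p_+<\infty$ is the key technical input: for every $\lambda>0$ and a.e.\ $x\in E$ one has $(|f(x)|/\lambda)^{p(x)}\le\max(\lambda^{-p_+},\lambda^{-p_-})\,|f(x)|^{p(x)}$, so $\rho(\lambda)$ is finite as soon as $\rho(1)$ is, and this dominated estimate, together with pointwise monotone convergence of $(|f|/\lambda)^{p(x)}$ as $\lambda\to 1$ from either side, makes $\rho$ continuous at $\lambda=1$ (by the monotone convergence theorem applied separately for $\lambda\downarrow1$ and $\lambda\uparrow1$).

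For part (1) I would argue both directions using this continuity. If $\|f\chi_E\|_{p(\cdot)}=1$, then $\rho(\lambda)\le 1$ for every $\lambda>1$, so letting $\lambda\downarrow1$ by monotone convergence gives $\rho(1)\le1$; likewise $\rho(\lambda)>1$ for every $\lambda<1$ by the definition of the infimum, so letting $\lambda\uparrow1$ gives $\rho(1)\ge 1$. Conversely, if $\rho(1)=1$ then $\lambda=1$ is admissible in the defining infimum, so $\|f\chi_E\|_{p(\cdot)}\le1$; and for any $\lambda<1$ one has $(|f|/\lambda)^{p(x)}\ge|f|^{p(x)}$ with strict inequality on the set where $f\ne 0$ in $E$ (which has positive measure since $\rho(1)=1$), so $\rho(\lambda)>1$, forcing $\|f\chi_E\|_{p(\cdot)}\ge1$.

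For part (2), I would split on whether $C\ge 1$ or $C\le 1$. Since $s\mapsto C^s$ is monotone, the maximum $\max(C^{1/p_-(E)},C^{1/p_+(E)})$ equals $C^{1/p_-(E)}$ in the first case and $C^{1/p_+(E)}$ in the second; call it $\lambda_0$. Writing $(|f(x)|/\lambda_0)^{p(x)}=\lambda_0^{-p(x)}|f(x)|^{p(x)}$, one checks in each case that $\lambda_0^{-p(x)}\le C^{-1}$ pointwise on $E$ (either because $\lambda_0\ge1$ and $p(x)\ge p_-(E)$, or because $\lambda_0\le1$ and $p(x)\le p_+(E)$). Integrating gives $\rho(\lambda_0)\le C^{-1}\rho_{p(\cdot),E}(f)\le 1$, so by definition $\|f\chi_E\|_{p(\cdot)}\le\lambda_0$. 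Part (3) is dual: from $\|f\|_{p(\cdot)}\le C$ and part (1) applied (after a monotone convergence argument letting $\lambda_n\downarrow C$), one gets $\int_{[0,1]}(|f|/C)^{p(x)}\,dx\le 1$. Writing $|f|^{p(x)}=C^{p(x)}(|f|/C)^{p(x)}$ on $E$ and bounding $C^{p(x)}\le\max(C^{p_-(E)},C^{p_+(E)})$ uniformly in $x\in E$ yields the claim.

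The only real obstacle is bookkeeping: keeping track of the two cases $C\lessgtr 1$ (and the corresponding flip of which endpoint of $[p_-(E),p_+(E)]$ is active) and making sure the monotone/dominated convergence step at $\lambda=1$ is justified. The domination $(|f|/\lambda)^{p(x)}\le\lambda^{-p_+}|f|^{p(x)}$ for $\lambda<1$ and the symmetric bound for $\lambda>1$ handle the integrability issue, and this is precisely where $p_+<\infty$ enters essentially.
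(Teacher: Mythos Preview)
The paper does not actually prove this proposition: it is stated as a known fact with the references \cite{CUF}, \cite{DHHR}, and no argument is given in the paper itself. Your proof is correct and is precisely the standard argument one finds in those references---monotonicity and continuity of $\lambda\mapsto\int_E(|f|/\lambda)^{p(x)}\,dx$ (using $p_+<\infty$ for the dominated/monotone convergence step), combined with the elementary pointwise bounds $\lambda^{-p(x)}\le\lambda^{-p_-(E)}$ or $\lambda^{-p_+(E)}$ according to whether $\lambda\ge1$ or $\lambda\le1$. One cosmetic remark: in part~(3) your phrase ``part~(1) applied'' is not quite what you use; the argument you wrote goes straight from the definition of the infimum and monotone convergence to $\int_{[0,1]}(|f|/C)^{p(x)}\,dx\le1$, which is all that is needed.
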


The next result is a necessary and sufficient condition for the
embedding $L^{q(\cdot)}[0;1]$ $\subset L^{p(\cdot)}[0;1]$
(see \cite{CUF}, \cite{DHHR}).

\begin{prop}
\label{proposition_embeding_Lp_Lq}
Given the exponents $p(\cdot), q(\cdot),$  suppose $1\leq p_{-}\leq
p_{+}<\infty, 1\leq q_{-}\leq q_{+}<\infty.$ Then
$L^{q(\cdot)}[0;1]\subset L^{p(\cdot)}[0;1]$ if and only if
$p(\cdot)\leq q(\cdot)$ almost everywhere. Furthermore, in this case
we have
$$
\|f\|_{p(\cdot)}\leq 2\|f\|_{q(\cdot)}.
$$
\end{prop}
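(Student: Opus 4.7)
The plan is to prove sufficiency and necessity separately. Sufficiency will yield the quantitative bound directly via a modular computation; necessity will be handled by contraposition, reducing to the classical constant-exponent case.

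First I would handle sufficiency. Assume $p(x)\le q(x)$ almost everywhere. Given nonzero $f\in L^{q(\cdot)}[0;1]$, normalize by $\lambda=\|f\|_{q(\cdot)}$ so that $g:=f/\lambda$ satisfies $\rho_{q(\cdot),[0;1]}(g)\le 1$ by Proposition \ref{proposition_modular_in_var}(1). Split $[0;1]=A\cup B$ with $A=\{|g|\le 1\}$ and $B=\{|g|>1\}$; on $A$ the bound $|g|^{p(x)}\le 1$ is immediate, while on $B$ the estimate $|g|^{p(x)}\le |g|^{q(x)}$ follows from $|g|>1$ combined with $p(x)\le q(x)$. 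Summing gives $\rho_{p(\cdot),[0;1]}(g)\le|A|+1\le 2$. Since $p_-\ge 1$, the pointwise inequality $(|g|/2)^{p(x)}\le |g|^{p(x)}/2$ holds, hence $\rho_{p(\cdot),[0;1]}(g/2)\le 1$, and Proposition \ref{proposition_modular_in_var}(1) then gives $\|g\|_{p(\cdot)}\le 2$. Multiplying through by $\lambda$ yields $\|f\|_{p(\cdot)}\le 2\|f\|_{q(\cdot)}$ and, in particular, the inclusion $L^{q(\cdot)}[0;1]\subset L^{p(\cdot)}[0;1]$.

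Next I would prove necessity by contraposition. Suppose $p(x)>q(x)$ on a measurable set $E\subset[0;1]$ with $|E|>0$. Enumerating pairs $(r_m,r_n)$ of rationals with $r_m<r_n$, the sets $E_{m,n}=\{x\in E:\ q(x)\le r_m<r_n\le p(x)\}$ cover $E$, so at least one, call it $E_0$, has positive measure. By the sufficiency direction already established (applied with constant exponents on $E_0$), functions supported in $E_0$ satisfy $L^{r_m}(E_0)\subset L^{q(\cdot)}(E_0)$ and $L^{p(\cdot)}(E_0)\subset L^{r_n}(E_0)$. Hence if $L^{q(\cdot)}[0;1]\subset L^{p(\cdot)}[0;1]$ were to hold, chaining these inclusions would force $L^{r_m}(E_0)\subset L^{r_n}(E_0)$, which contradicts the classical fact that this inclusion fails on a set of positive finite measure when $r_m<r_n$; a witness is any rearrangement onto $E_0$ of the function $t\mapsto t^{-\alpha}$ on $(0,|E_0|)$ with $\alpha\in[1/r_n,1/r_m)$.

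The main obstacle is the necessity part, and within it the requirement to separate $L^{r_m}$ from $L^{r_n}$ on an arbitrary measurable subset $E_0$; this is resolved by a measure-preserving reduction to an interval, which is standard but worth flagging. The sufficiency argument is by contrast a routine modular computation whose only subtlety is the scaling step converting a modular bound of $2$ into a norm bound of $2$; that step uses $p_-\ge 1$ in an essential way and is what produces the constant in the stated inequality.
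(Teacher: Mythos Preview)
The paper does not prove this proposition; it is quoted as a known result with a reference to the monographs \cite{CUF} and \cite{DHHR}. Your argument is correct and is essentially the standard proof one finds in those references: the pointwise inequality $|g(x)|^{p(x)}\le |g(x)|^{q(x)}+1$ (which is exactly your $A$/$B$ split) together with the unit-ball property of the modular gives $\rho_{p(\cdot)}(g)\le 2$, and the scaling step using $p_-\ge1$ converts this to the norm bound $2$. The necessity argument via a rational sandwich $q(\cdot)\le r_m<r_n\le p(\cdot)$ on a set of positive measure is likewise the standard route.

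One cosmetic remark: at the end of the sufficiency part you cite Proposition~\ref{proposition_modular_in_var}(1) to pass from $\rho_{p(\cdot)}(g/2)\le1$ to $\|g\|_{p(\cdot)}\le2$, but that item is the \emph{equality} case; the conclusion you want follows directly from the definition of the Luxemburg norm as an infimum, or alternatively from Proposition~\ref{proposition_modular_in_var}(2) applied with $C=2$. This does not affect the validity of the proof.
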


For our results we need to impose some regularity on the exponent
function  $p(\cdot).$ The most important  condition, one widely used
in the study of variable Lebesgue spaces, is log-L\"{o}lder
continuity. Let $C^{1/\log}$ denotes the set of exponents
$p:[0;1]\rightarrow[1,+\infty)$ with log-H\"{o}lder condition
\begin{equation}
|(p(x)-p(y))\ln|x-y||\leq C,\,\, x,\,y\in[0;1],\,x\neq y.
\end{equation}

\par For Lebesgue integrable function $f$ define Hardy-Littlewood maximal function
$$
Mf(x)=\sup\limits_{x\in Q}|f|_Q,
$$
where supremum is taken over all $Q\subset [0;1]$ intervals
containing point $x$  and $f_{Q}$ denotes the average of function $f$
on $Q.$  Let by $\mathcal{B}$ denote set of all exponents $p(\cdot)$ for which
Hardy-Littlewood maximal operator is bounded on the space
$L^{p(\cdot)}[0;1]$. Diening \cite{DI} proved  a key consequence
of log-H\"{o}lder continuity of $p(\cdot)$. If $1<p_-$ and
$p(\cdot)\in C^{1/\log},$ then $p(\cdot)\in \mathcal{B}.$

\par Kopaliani \cite{K1} proved that if exponent $p(\cdot)$ satisfies log-H\"{o}lder conditions then
the pair of BFSs $(L^{p(\cdot)}[0;1],L^{p(\cdot)}[0;1])$
 has property $G$. Note that there are another classes of exponents
 $p(\cdot)$ such that pair of BFSs $(L^{p(\cdot)}[0;1],L^{p(\cdot)}[0;1])$ has property
 $G.$ For instance, if  exponent $p(\cdot)$ is absolutely continuous on
 $[0;1]$, then the pair of BFSs  $(L^{p(\cdot)}[0;1],L^{p(\cdot)}[0;1])$ has property $G$ (see
\cite{K3}).  Note also that there exists continuous exponent on
$[0;1]$ such that the pair of BFSs $(L^{p(\cdot)}[0;1],
L^{p(\cdot)}[0;1])$ does not have property $G$ (see \cite{K1}).

Given a function $f\in L^{1}[0;1]$. Let define its $BMO$ modulus by
$$
\gamma(f,r)=\sup\limits_{|Q|\leq
r}\frac{1}{|Q|}\int_{Q}|f(x)-f_{Q}|dx,\,\,0<r\leq1,
$$
where supremum is taken over all intervals of $[0;1].$ We say that $
f\in BMO^{1/\log}$ if $\gamma(f,r)\leq C/\log(e+1/r)$ and $ f\in
VMO^{1/\log}$ if $\gamma(f,r)\log(e+1/r)\rightarrow 0$ as
$r\rightarrow0.$

Given a function $f\in L^{1}[0;1]$. Let define its $BLO$ modulus by
$$
\eta(f,r)=\sup\limits_{|Q|\leq
r}(f_{Q}-\mathop{\mbox{essinf}}\limits_{x\in Q}f(x)),\,\,0<r\leq1,
$$
where supremum is taken over all intervals of $[0;1].$ We say that $
f\in BLO^{1/\log}$ if $\eta(f,r)\leq C/\log(e+1/r)$.

\par The class $BMO^{1/\log}$ is very important for investigation of
exponents from $\mathcal{B}.$
\begin{thm} [\cite{Le}, \cite{KK}]
\label{theorem_BMO_BLO_VLO}
Let $p:[0;1]\to[1;+\infty)$, then\\
\textsc{1)} if $p(\cdot)\in \mathcal{B}$, then $1/p(\cdot)\in BMO^{1/\log}$;\\
\textsc{2)} if $p(\cdot)\in VMO^{1/\log}$, then $p(\cdot)\in \mathcal{B}$;\\
\textsc{3)} if $p(\cdot)\in BMO^{1/\log}$, then there exists $c$ such that
$p(\cdot)+c\in \mathcal{B}$.
\end{thm}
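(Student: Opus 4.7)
The plan is to establish the three implications separately, since each uses a distinct technique. For Part 1, I would test the boundedness of $M$ on characteristic functions of intervals. Given $Q\subset[0;1]$ of small measure, Proposition \ref{proposition_modular_in_var} gives two-sided bounds of the form $|Q|^{1/p_+(Q)}\lesssim\|\chi_Q\|_{p(\cdot)}\lesssim|Q|^{1/p_-(Q)}$, while the pointwise estimate $M\chi_Q(x)\gtrsim\chi_{2Q}(x)$ together with $\|M\chi_Q\|_{p(\cdot)}\lesssim\|\chi_Q\|_{p(\cdot)}$ forces a doubling inequality $\|\chi_{2Q}\|_{p(\cdot)}\lesssim\|\chi_Q\|_{p(\cdot)}$. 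Iterating this comparison and taking logarithms produces $\bigl(1/p_-(Q)-1/p_+(Q)\bigr)\log(1/|Q|)\lesssim1$, which after a standard averaging step yields the $BMO^{1/\log}$ bound for $1/p(\cdot)$.

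For Part 2, I would approximate $p(\cdot)$ by constants at small scales. The $VMO^{1/\log}$ hypothesis makes $|p(x)-p_Q|\log(1/|Q|)$ uniformly small on $Q$ whenever $|Q|$ is small, so on such $Q$ the norm $\|\cdot\|_{p(\cdot)}$ is comparable, with constant tending to $1$, to $\|\cdot\|_{p_Q}$. A Calder\'on--Zygmund / Whitney decomposition at the level of $Mf$, combined with the constant-exponent maximal inequality applied on each piece (with a uniform bound controlled by $p_-$ and $p_+$), then yields $\|Mf\|_{p(\cdot)}\lesssim\|f\|_{p(\cdot)}$, the deviation caused by the varying exponent being absorbed using that $\gamma(p,r)\log(e+1/r)\to0$.

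For Part 3, the strategy is to verify a Diening-type averaging condition for $p(\cdot)+c$: namely that for every interval $Q$,
\[
\|\chi_Q\|_{p(\cdot)+c}\cdot\|\chi_Q\|_{(p(\cdot)+c)'}\lesssim|Q|,
\]
with constant independent of $Q$, plus a uniform local averaging condition on dyadic intervals, which together imply $p(\cdot)+c\in\mathcal{B}$ via Diening's characterization. The $BMO^{1/\log}$ bound on $1/p(\cdot)$, combined with a John--Nirenberg-type self-improvement on that scale, yields a decaying oscillation estimate; shifting $p$ by $c$ scales this oscillation down by roughly a factor of $(p+c)^{-2}$, so choosing $c$ large enough shrinks the implicit constants below the universal threshold needed for the averaging condition.

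The hard part will be Part 3: it requires a sufficiently quantitative John--Nirenberg-type self-improvement on the $BMO^{1/\log}$ scale, so that after the shift the oscillation is small enough to meet the known sufficient condition for maximal boundedness. Parts 1 and 2 rely on more standard testing and local-approximation arguments and serve as warm-ups.
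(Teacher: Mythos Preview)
The paper does not prove Theorem~\ref{theorem_BMO_BLO_VLO}; it is quoted from the references \cite{Le} and \cite{KK} and stated without argument, so there is no in-paper proof to compare your proposal against. What follows is a comment on your reconstruction.

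There is a genuine gap in your sketch of Part~1. You claim that the doubling inequality $\|\chi_{2Q}\|_{p(\cdot)}\lesssim\|\chi_Q\|_{p(\cdot)}$, combined with the trivial sandwich $|Q|^{1/p_{+}(Q)}\le\|\chi_Q\|_{p(\cdot)}\le|Q|^{1/p_{-}(Q)}$ and iteration, yields
\[
\bigl(1/p_{-}(Q)-1/p_{+}(Q)\bigr)\log(1/|Q|)\lesssim 1.
\]
But this last inequality, holding uniformly over intervals $Q$, is exactly the local log-H\"older condition on $1/p$, which is strictly stronger than $1/p(\cdot)\in BMO^{1/\log}$. It cannot follow from $p(\cdot)\in\mathcal{B}$: Lerner in \cite{Le} produces discontinuous exponents in $\mathcal{B}$ (and the present paper constructs one as well, the exponent built from $g$ in the proof of Theorem~\ref{theorem_BLO_G'_notG''}) which violate any pointwise oscillation bound of this type. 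Mechanically, combining doubling with the crude two-sided bound only gives $|2Q|^{1/p_{+}(2Q)}\lesssim|Q|^{1/p_{-}(Q)}$, which carries no information since $1/p_{+}(2Q)\le 1/p_{-}(Q)$ automatically. The argument in \cite{Le} instead tests $M$ on functions supported on sublevel sets of $p$ inside $Q$ and reads off the \emph{averaged} oscillation of $1/p$ directly from the resulting modular inequality; the essential supremum and infimum of $p$ over $Q$ never enter.

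Your outlines for Parts~2 and~3 are at the level of strategy and broadly compatible with the cited literature (local constant-exponent approximation for Part~2; a John--Nirenberg-type self-improvement plus a shift for Part~3), but they omit the quantitative estimates that carry the proofs, and for Part~3 the route through Diening's $A$-type condition is not quite how \cite{KK} proceeds.
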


\section{Proof of results}

\begin{proof}[Proof of theorem \ref{theorem_BLO_G'}]

We begin with auxiliary estimations.
\begin{lem}
Let $p(\cdot)$  be a exponent on $[0;1]$ with $1\leq p_{-}\leq
p_{+}<\infty.$ Then for all $t\geq0$ and $Q\subset[0;1]$ interval
\begin{equation}
\label{estim_mean_of_t_p}
\frac{1}{|Q|}\int_{Q}t^{p(x)}dx\geq
e^{2(p_{-}(Q)-p_{+}(Q))}t^{\overline{p}_{Q}},
\end{equation}
where $\overline{p}_{Q}$ is defined as
$\frac{1}{\overline{p}_{Q}}=\frac{1}{|Q|}\int_{Q}\frac{1}{p(x)}dx$.
\end{lem}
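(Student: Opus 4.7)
The plan is to apply Jensen's inequality against a cleverly chosen probability measure that converts the harmonic mean $\overline{p}_{Q}$ into an arithmetic mean. Specifically, I would define
\[
d\nu(x) = \frac{\overline{p}_{Q}}{p(x)\,|Q|}\,dx
\]
on $Q$. The very definition $\frac{1}{\overline{p}_{Q}} = \frac{1}{|Q|}\int_{Q}\frac{1}{p(x)}\,dx$ is exactly what makes $\nu$ a probability measure, and the identity $\int_{Q} p(x)\,d\nu(x) = \overline{p}_{Q}$ is then immediate. This choice of measure is the only non-mechanical step.

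For any $t>0$ the map $u\mapsto t^{u}=e^{u\log t}$ is convex in $u$ (its second derivative is $(\log t)^{2}t^{u}\ge 0$), so Jensen's inequality against $\nu$ gives
\[
\int_{Q} t^{p(x)}\,d\nu(x)\;\ge\; t^{\int_{Q} p(x)\,d\nu(x)}\;=\;t^{\overline{p}_{Q}}.
\]
Rewriting the left-hand side as $\frac{\overline{p}_{Q}}{|Q|}\int_{Q}\frac{t^{p(x)}}{p(x)}\,dx$ and dominating $\frac{1}{p(x)}\le\frac{1}{p_{-}(Q)}$ converts this into
\[
\frac{1}{|Q|}\int_{Q} t^{p(x)}\,dx \;\ge\; \frac{p_{-}(Q)}{\overline{p}_{Q}}\,t^{\overline{p}_{Q}} \;\ge\; \frac{p_{-}(Q)}{p_{+}(Q)}\,t^{\overline{p}_{Q}}.
\]

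To finish, it suffices to check the scalar inequality $\frac{p_{-}(Q)}{p_{+}(Q)}\ge e^{2(p_{-}(Q)-p_{+}(Q))}$. This reduces to the elementary estimate $\log\frac{p_{+}(Q)}{p_{-}(Q)}=\int_{p_{-}(Q)}^{p_{+}(Q)}\frac{du}{u}\le p_{+}(Q)-p_{-}(Q)$, valid because $p_{-}(Q)\ge 1$, followed by the trivial bound $p_{+}(Q)-p_{-}(Q)\le 2(p_{+}(Q)-p_{-}(Q))$. I do not anticipate any genuine obstacle here: everything reduces to spotting the correct probability measure for Jensen, after which the remaining computations are routine, and the factor $2$ in the exponent of the statement appears to be simple slack (my argument actually delivers the stronger constant $e^{p_{-}(Q)-p_{+}(Q)}$).
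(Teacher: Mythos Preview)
Your argument is correct. The paper does not actually supply its own proof of this lemma; it merely cites Lemma~4.1 of Diening~\cite{Di} and remarks that the borderline case $p_{-}=1$ goes through by the same reasoning. The weighted Jensen trick you use---reweighting $dx$ by $1/p(x)$ so that the harmonic mean $\overline{p}_{Q}$ becomes an arithmetic mean with respect to $\nu$---is precisely Diening's method, so your approach coincides with the one the paper invokes. Your observation that $p_{-}(Q)/p_{+}(Q)\ge e^{p_{-}(Q)-p_{+}(Q)}$ already suffices, making the factor~$2$ in the exponent pure slack, is correct and a mild sharpening. The edge case $t=0$ is not written out but is immediate since both sides vanish.
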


This lemma is proved in  \cite{Di} (see Lemma 4.1) in case of $1<
p_{-}\leq p_{+}<\infty,$ but analogously may be proved in presented
case. If in (\ref{estim_mean_of_t_p}) we take $t=\frac{1}{\|\chi_{Q}\|_{p(\cdot)}},$
we obtain
\begin{equation}
\label{estim_chi_norm_lower}
\|\chi_{Q}\|_{p(\cdot)}\geq C_{1}|Q|^{(1/p(\cdot))_{Q}},
\end{equation}
for some constant $C_{1}>0.$

Now assume that $1/p(\cdot)\in BLO^{1/\log}$, then there exists $C_{2}$
such that
\begin{equation}
\label{estim_chi_norm_upper}
|Q|^{(1/p)_Q}=|Q|^{\frac{1}{|Q|}\int\limits_Q\frac{1}{p(x)}dx-\frac{1}{p_{+}(Q)}
+\frac{1}{p_{+}(Q)}}
\end{equation}
$$
\geq|Q|^{\frac{C}{\ln(e+1/|Q|)}+\frac{1}{p_{+}(Q)}}\geq
C_{2}\cdot |Q|^{\frac{1}{p_{+}(Q)}}.
$$
From (\ref{estim_chi_norm_lower}) and (\ref{estim_chi_norm_upper}) we obtain
\begin{equation}
\label{estim_chi_norm_uper_lover}
C_{3}\cdot|Q|^{1/p_+(Q)}\leq||\chi_Q||_{p(\cdot)}\leq C_4\cdot|Q|^{1/p_{+}(Q)}.
\end{equation}

\par Let $\mathcal{Q}=\{Q_1,Q_2,...\}$ denotes some partition of $[0;1]$.
Define on $[0;1]$ function $\tilde{p}(\cdot)$ in following way:
$\tilde{p}(x)=p_{+}(Q_i)$ when $x\in Q_i$.

\par Without restriction of
generality let consider case when $||f||_{p(\cdot)}=1.$  By
Proposition \ref{proposition_modular_in_var}  $\int_0^1|f(x)|^{p(x)}dx=1$ . Then we only need to
prove that
$$
\label{estim_G'_f=1}
\norm{\sum_i
\frac{||f\chi_{Q_i}||_{p(\cdot)}}{||\chi_{Q_i}||_{p(\cdot)}}\chi_{Q_i}(x)}_{p(\cdot)}\leq
C.
$$
\par By Proposition \ref{proposition_modular_in_var}
 we have
\begin{equation}
\label{estim_f_norm_like_modular}
||f\chi_{Q_i}||_{p(\cdot)}\leq\left(\int\limits_{Q_i}|f(x)|^{p(x)}dx\right)^{1/p_{+}(Q_i)}.
\end{equation}

Then by (\ref{estim_chi_norm_uper_lover}) and (\ref{estim_f_norm_like_modular})
$$
\int_{[0;1]} \left(\sum_i
\frac{||f\chi_{Q_i}||_{p(\cdot)}}{||\chi_{Q_i}||_{p(\cdot)}}\chi_{Q_i}(x)\right)^{\tilde{p}(x)}dx=
\sum_i
\int_{Q_i}\left(\frac{||f\chi_{Q_i}||_{p(\cdot)}}{||\chi_{Q_i}||_{p(\cdot)}}\right)^{p_{+}(Q_i)}\chi_{Q_i}(x)dx
$$
$$
=\sum_i |Q_i| \left(
\frac{||f\chi_{Q_i}||_{p(\cdot)}}{||\chi_{Q_i}||_{p(\cdot)}}\right)^{p_{+}(Q_i)}\leq\sum_i
|Q_i|\frac{\int\limits_{Q_i}|f(x)|^{p(x)}dx}{C_1|Q_i|}
$$
$$
=\frac{1}{C_1}\int_{[0;1]}|f(x)|^{p(x)}dx=\frac{1}{C_1}.
$$
Consequently we obtain
$$
 \norm{\sum_i
\frac{||f\chi_{Q_i}||_{p(\cdot)}}{||\chi_{Q_i}||_{p(\cdot)}}\chi_{Q_i}(x)}_{\tilde{p}(\cdot)}\leq
C.
$$
Using the fact that $p(x)\leq\tilde{p}(x),\,x\in[0;1]$ and proposition
\ref{proposition_embeding_Lp_Lq} we obtain desired result.
\end{proof}

\begin{proof}[Proof of theorem \ref{theorem_BLO+C+G''}]
The proof of theorem can be obtained from analogous arguments as in proof of theorem \ref{theorem_BLO_G'}. But we will obtain this proof from more general proposition.
\par Consider exponent $p(\cdot)$ such that $1/p(\cdot)\in BLO^{1/\log}$, then by theorem \ref{theorem_BMO_BLO_VLO} there exits constant $c$ such $p(\cdot)+c\in\mathcal{B}.$
Using theorem \ref{theorem_G'_and_condition_A_imply_G''} and theorem \ref{theorem_BLO_G'} we obtain desired result.
\end{proof}

\begin{proof}[Proof of theorem \ref{theorem_BLO_G'_notG''}]

Let us show that  the function
$$
f(x)=\left\{\begin{array}{rcl}
\ln\ln(1/x) & \textrm{if} & x\in (0,e^{-1}];\\
0 & \textrm{if} & x\in(e^{-1},1],
\end{array}
\right.
$$
belongs to $BLO^{1/\log}.$

 Let $(a;b)\subset[0;1].$ Without loss of generality assume that $0\leq a<b\leq
 e^{-1}.$  On  $(a;b]$ define  the function
$$
h(x)=\int\limits_a^x\ln\ln(1/t)dt-(x-a)\ln\ln(1/x)-\frac{2(x-a)}{\ln(1/(x-a))}.
$$
We have
$$
h'(x)=\frac{x-a}{x\ln(1/x)}-2\cdot
\frac{\ln(1/(x-a))+1}{(\ln(1/(x-a)))^2},\,\,a<x\leq b.
$$
 Since the function $x\ln(1/x)$ on $(0;1)$ is increasing
$$
(\ln(1/(x-a)))^2(x-a)-2x\ln(1/x)(\ln(1/(x-a))+1)
$$
$$
\leq\ln\frac{1}{x-a}\left((x-a)\ln\frac{1}{x-a}-2x\ln\frac{1}{x}\right)\leq
-x\cdot\ln\frac{1}{x}\cdot\ln\frac{1}{x-a}<0.
$$
 This means that function $h$ is decreasing. From monotonicity of $h$ and $h(a+)=0$ follows thats
$$
\int\limits_a^b\ln\ln(1/x)dx-(b-a)\ln\ln(1/b)-\frac{2(b-a)}{\ln(1/(b-a))}\leq0.
$$
By the last inequality we get
\begin{equation}
\label{estim_blo_koeff}
\frac{1}{b-a}\int\limits_a^b\ln\ln(1/x)dx-\ln\ln(1/b)\leq
\frac{4}{\ln(e+1/(b-a))},
\end{equation}
and consequently $f\in BLO^{1/\log}.$

 Note that function $f$ is a classical example
discontinuous functions from $BMO^{1/\log}$ (see \cite{Sp}).  From
the well-known observation that a Lipschitz function preserves mean
oscillations it follows that the function $\sin(f(x))$ provides  an
example of a discontinuous bounded function from $BMO^{1/\log}.$
Lerner \cite{Le} proved that if $p(x)=p_{0}+\mu\sin(f(x)),\:x\in[0;1]$ where $p_{0}>0$ and $\mu$
sufficiently close to $0,$ then Hardy-Littlewood maximal operator is
bounded on $L^{p(\cdot)}[0;1]$. It is unknown whether $p(\cdot)\in
BLO^{1/\log}.$  Bellow we will construct a bounded  function (some
sense analogous of $\sin(f(x))$) which belongs to $BLO^{1/\log}.$

 Let  $d_n=e^{-e^n},\,\,n\in\{0\}\cup\mathbb{N}$ and $c_0=2/e$, $c_{2n+1}=c_{2n}-(d_n-d_{n+1})$,  $c_{2n+2}=c_{2n+1}-(d_n-d_{n+1})$,  $n\in\{0\}\cup\mathbb{N}$. Let us show that
 the non-negative bounded function
$$
g(x)=\left\{\begin{array}{rcl}
\ln\ln\frac{1}{c_{2n}+c_{2n+2}-x-d_n}-n & \textrm{if} & x\in (c_{2n+2};\,c_{2n+1}], n\in\{0\}\cup\mathbb{N}\vspace{1ex};\\
\ln\ln\frac{1}{x-d_n}-n & \textrm{if} & x\in(c_{2n+1};\,c_{2n}], n\in\{0\}\cup\mathbb{N}\vspace{1ex};\\
0 & \textrm{if} & x\in(2/e,1].
\end{array}
\right.
$$
belongs to $BLO^{1/\log}$ i.e. for all $(a;b)\subset[0;1]$ we have
\begin{equation}
\label{estim_g_BLO_log}
\frac{1}{b-a}\int_{(a;b)}g(x)dx-\inf\limits_{x\in(a;b)}g(x)\leq\frac{C}{\ln(e+1/(b-a))}.
\end{equation}

\par Note that $g(c_{2n})=0$, $g(c_{2n+1})=1$, $n\in
\{0\}\cup\mathbb{N}$ and on each set $[c_{2n+1};c_{2n}]$ function $g$ is strictly monotonic
and continuous. 
\par Let $(a;b)\subset[0;1]$, without lose of generality suppose that $b\leq2/e$. Consider three cases:
\par Case 1.  At least one point $c_{2n}$ belongs to interval $(a;b)$, where $n\in\{0\}\cup\mathbb{N}$;
\par Case 2.  Interval $(a;b)$ contains only one point like $c_{2n+1}$, where $n\in\{0\}\cup\mathbb{N}$;
\par Case 3.  Interval $(a;b)$ does not contain point $c_n$ for any $n\in\{0\}\cup\mathbb{N}$.

\par Define $m_a=\sup\{k\::\:a\leq c_k\}$, $m_b=\min\{k\::\:c_k\leq b\}$. Note that if $a>0$ then $m_a=\max\{k\::\:a\leq c_k\}$ and $m_a=\infty$ if $a=0$.
\par Consider case 1. Suppose that $m_a<\infty$, define $m'_a=\max\{k\::\:a\leq c_k \:\wedge\:g(c_k)=0\}$ and $m'_b=\min\{k\::\:c_k\leq b\:\wedge\: g(c_k)=0\}$. It is clear that $c_{m_a} \leq c_{m'_a} \leq c_{m'_b}\leq c_{m_b}$. We have
\begin{equation}
\label{represent_case_1}
\frac{1}{b-a}\int_{(a;b)}g(x)dx-\inf_{x\in(a;b)}g(x)=\frac{1}{b-a}
\int_{(a;b)}g(x)dx
\end{equation}
$$
=\frac{1}{b-a}\left(\int_a^{c_{m'_a}}+\int_{c_{m'_a}}^{c_{m'_b}}+\int_{c_{m'_b}}^b\right)g(x)dx=A_1+A_2+A_3.
$$

\par Let $c_{m'_a}<c_{m'_b}$. Using the fact that $g(2c_{2k+1}-x)=g(x)$ when $x\in[c_{2k+2};c_{2k+1}]$ we get
$$
(b-a)A_2=\int\limits_{c_{m'_a}}^{c_{m'_b}}g(x)dx=\sum\limits_{k=m'_b/2}^{(m'_a-2)/2}\int\limits_{c_{2k+2}}^{c_{2k}}g(x)dx=\sum\limits_{k=m'_b/2}^{(m'_a-2)/2}\left(\int\limits_{c_{2k+2}}^{c_{2k+1}}+\int\limits_{c_{2k+1}}^{c_{2k}}\right)g(x)dx
$$
$$
=\sum\limits_{k=m'_b/2}^{(m'_a-2)/2}\:\:2\int\limits_{c_{2k+1}}^{c_{2k}}g(x)dx=2\sum\limits_{k=m'_b/2}^{(m'_a-2)/2}\:\:\int\limits_{c_{2k+1}}^{c_{2k}}\left(\ln\ln\frac{1}{x-d_k}-k\right)dx.
$$
Note that by $c_{2k}-d_k=d_k$ and $c_{2k+1}-d_k=d_{k+1}$ we have
$$
(b-a)A_2=2\sum\limits_{k=m'_b/2}^{(m'_a-2)/2}\:\:\int\limits_{d_{k+1}}^{d_{k}}\left(\ln\ln\frac{1}{t}-k\right)dt\leq 2\sum\limits_{k=m'_b/2}^{(m'_a-2)/2}\:\:\int\limits_{d_{k+1}}^{d_{k}}\left(\ln\ln\frac{1}{t}-\frac{m'_b}{2}\right)dt
$$
$$
=2\sum\limits_{k=m'_b/2}^{(m'_a-2)/2}\:\:\int\limits_{d_{k+1}}^{d_{k}}\left(\ln\ln\frac{1}{t}-\ln\ln\frac{1}{d_{m'_b/2}}\right)dt
=2\int\limits_{d_{m'_a/2}}^{d_{m'_b/2}}\left(\ln\ln\frac{1}{t}-\ln\ln\frac{1}{d_{m'_b/2}}\right)dt.
$$
Now by the following estimation $b-a>(b-a)/2\geq  d_{m'_b/2}-d_{m'_a/2}$ and by (\ref{estim_blo_koeff}) we have
\begin{equation}
\label{estim_A_2_1}
A_2\leq\frac{1}{d_{m'_b/2}-d_{m'_a/2}}\int\limits_{d_{m'_a/2}}^{d_{m'_b/2}}\left(\ln\ln\frac{1}{t}-\ln\ln\frac{1}{d_{m'_b/2}}\right)dt
\end{equation}

$$
\leq\frac{4}{\ln(e+1/(d_{m'_b/2}-d_{m'_a/2}))}\leq\frac{4}{\ln(e+1/(b-a))}.
$$
If $m_a=\infty$ then 
\begin{equation}
\label{estim_A_2_2}
A_2\leq\frac{1}{d_{m'_b/2}-0}\int\limits_{0}^{d_{m'_b/2}}\left(\ln\ln\frac{1}{t}-\ln\ln\frac{1}{d_{m'_b/2}}\right)dt\leq\frac{4}{\ln(e+1/(b-a))}.
\end{equation}
\par Consider $A_1$. Let $c_{m_a}=c_{m'_a}$. Since $c_{m'_a}-d_{m'_a/2}=d_{m'_a/2}$ and using (\ref{estim_blo_koeff}) we get
\begin{equation}
\label{estim_A_1_1}
A_1=\frac{1}{b-a}\int\limits_a^{c_{m'_a}}\left(\ln\ln\frac{1}{x-d_{m'_a/2}}-\frac{m'_a}{2}\right)dx
\end{equation}
$$
=\frac{1}{b-a}\int\limits_{a-d_{m'_a/2}}^{c_{m'_a}-d_{m'_a/2}}\left(\ln\ln\frac{1}{t}-\frac{m'_a}{2}\right)dx\leq\frac{4}{\ln(e+1/(c_{m'_a}-a))}\leq\frac{4}{\ln(e+1/(b-a))}.
$$
Let $c_{m_a}\neq c_{m'_a}$ then $m_a=m'_a+1$ and $g(c_{m_a})=1$. Since $c_{m'_a}-d_{m'_a/2}=d_{m'_a/2}$ we get
\begin{equation}
\label{estim_A_1_2}
A_1\leq\frac{2}{b-a}\int\limits_{c_{m_a}}^{c_{m'_a}}\left(\ln\ln\frac{1}{x-d_{m'_a/2}}-\frac{m'_a}{2}\right)dx=
\end{equation}
$$
=\frac{2}{b-a}\int\limits_{c_{m_a}-d_{m'_a/2}}^{c_{m'_a}-d_{m'_a/2}}\left(\ln\ln\frac{1}{t}-\ln\ln\frac{1}{d_{m'_a/2}}\right)dx\leq\frac{8}{\ln(e+1/(b-a))}.
$$

\par Consider $A_3$. Let $c_{m_b}=c_{m'_b}$. Since $c_{m_b-2}-d_{(m_b-2)/2}=d_{(m_b-2)/2}$ we get
\begin{equation}
\label{estim_A_3_1}
A_3=\frac{1}{b-a}\int\limits_{c_{m_b}}^b\left(\ln\ln\frac{1}{c_{m_b}+c_{m_b-2}-x-d_{(m_b-2)/2}}-\frac{m_b-2}{2}\right)dx=
\end{equation}
$$
=\frac{1}{b-a}\int\limits_{c_{m_b}+c_{m_b-2}-b-d_{(m_b-2)/2}}^{c_{m_b-2}-d_{(m_b-2)/2}}\left(\ln\ln\frac{1}{t}-\frac{m_b-2}{2}\right)dt\leq\frac{4}{\ln(e+1/(b-a))}.
$$
If $c_{m_b}\neq c_{m'_b}$ then $m_b=m'_b-1$ and $g(m_b)=1$ we have
\begin{equation}
\label{estim_A_3_2}
A_3\leq\frac{2}{b-a}\int\limits_{c_{m'_b}}^{c_{m_b}}\left(\ln\ln\frac{1}{c_{m'_b}+c_{m'_b-2}-x-d_{(m'_b-2)/2}}-\frac{m'_b-2}{2}\right)dx=
\end{equation}
$$
=\frac{2}{b-a}\int\limits_{c_{m'_b}+c_{m'_b-2}-c_{m_b}-d_{(m'_b-2)/2}}^{c_{m'_b-2}-d_{(m'_b-2)/2}}\left(\ln\ln\frac{1}{t}-\frac{m'_b-2}{2}\right)dt\leq\frac{8}{\ln(e+1/(b-a))}.
$$

\par In case of $m'_a=m'_b$ desired result can be obtained from estimations of $A_1$ and $A_3$.

\vspace{2 mm}

\par Case 2. It is clear that in this case $c_{m_a}=c_{m_b}=c_n$ where $n$ is odd. Note that restriction of function $g$ on the interval $(c_{n+1};c_{n-1})$ has symmetry about $x=c_n$ line, therefore without loss of generality we can assume that $g(a)\geq g(b)$ then
\begin{equation}
\label{estim_example_case_2}
\frac{1}{b-a}\int\limits_a^bg(x)dx-g(b)=\frac{1}{b-a}\int\limits_a^b(g(x)-g(b))dx\leq
\end{equation}
$$
\leq\frac{2}{b-a}\int\limits_{c_n}^b\left(\left(\ln\ln\frac{1}{x-d_{(n-1)/2}}-\frac{n-1}{2}\right)-g(b) \right)dx\leq
$$
$$
\leq\frac{2}{b-c_n}\int\limits_{c_n}^b\left(\ln\ln\frac{1}{x-d_{(n-1)/2}}-\ln\ln\frac{1}{b-d_{(n-1)/2}}\right)\leq\frac{4}{\ln(e+1/(b-a))}.
$$

\vspace{2 mm}

\par Case 3. In this case by (\ref{estim_blo_koeff}) we get desired estimation.
\par Finally by the estimates (\ref{represent_case_1})-(\ref{estim_example_case_2}) and (\ref{estim_blo_koeff}) we get (\ref{estim_g_BLO_log}).
\vspace{3 mm}

\par Now let construct exponent $p(\cdot)$ such that $1/p(\cdot)\in BLO^{1/\log}$
but $G''$ property fails.
\par We choose real numbers $a$ and $b$ such that $0<a<b<1$, $a+b<1$. Consider sets $A$ and $B$
$$
A=\set{x\::\:g(x)\leq a},\qquad
B=\set{x\::\:g(x)\geq b}.
$$
It is clear that this sets are union of intervals and let denote they by $\Delta_n^a$ and $\Delta_n^b$ i.e.
$$
A=\mathop{\bigcup}\limits_{n\geq1}\Delta_n^a,\qquad B=\mathop\bigcup\limits_{n\geq1}\Delta_n^b.
$$

\par Let now construct exponent $p$ in following way
$$
p(x)=\left\{\begin{array}{rcl}
1/a & \textrm{if} & x\in A \vspace{1ex};\\
1/b & \textrm{if} & x\in B\vspace{1ex};\\
1/g(x) & \textrm{if} &
x\in[0;1]\backslash(A\cup B).\\
\end{array}
\right.
$$

It is clear that $p(\cdot)$ is continuous except point 0, where it has discontinuity and $1/p(\cdot)\in BLO^{1/\log}$.
\par Let consider the set of right side endpoints of intervals from $A$. Let make partition of $[0;1]$ by these points. So we will get sequence of disjoint intervals $\Delta_n$ such that
$\Delta_n^a\cup \Delta_n^b\subset \Delta_n$.
\par Let $\delta_k=\min\{|\Delta_k^a|,|\Delta_k^b|\}$. Since $\delta_k\leq\min\{|\Delta_n^a|, |\Delta_n^b|\}$ for all $n\leq k$ then for each $n$, $n\leq k$ we can choose intervals $\Delta_n^{a'}\subset\Delta_n^a$ and $\Delta_n^{b'}\subset\Delta_n^b$ such that $\delta_k=|\Delta_n^{a'}|=|\Delta_n^{b'}|$.
\par Now for each $k$ we construct functions $f_k$ and $g_k$ in following way $f_k(x)=\chi_{\cup_{n\leq k}\Delta_n^{a'}}(x)$ and $g_k(x)=\chi_{\cup_{n\leq k}\Delta_n^{b'}}(x)$.
\par Let now check property $G$ of $L^{p(\cdot)}[0;1]$
$$
\sum_{n=1}^k||f_k\chi_{\Delta_n}||_{L^{1/a}}\cdot||g_k\chi_{\Delta_n}||_{L^{1/b}}=
\sum_{n=1}^k||\chi_{\Delta_n^{a'}}||_{L^{1/a}}\cdot||\chi_{\Delta_n^{b'}}||_{L^{1/b}}=
$$
$$
=\sum_{n=1}^k|\Delta_n^{a'}|^{a}\cdot|\Delta_n^{b'}|^{b}=k\cdot \delta_k^{a+b}.
$$
On the other hand
$$
||f_k||_{L^{1/a}}\cdot||g||_{L^{1/b}}=\left(\sum_{n=1}^k|\Delta_n^{a'}|\right)^{a}\cdot\left(\sum_{n=1}^k|\Delta_n^{b'}|\right)^{b}=\left(k\cdot \delta_k\right)^{a+b}.
$$
Property $G$ states that, there exits absolute constant $C$ such that
$$
k\cdot \delta_k^{a+b}\leq C\cdot \left(k\cdot \delta_k\right)^{a+b},
$$
we have
$$
k^{1-a-b}\leq C.
$$
The last estimation is impossible since $a+b<1$ and $k^{1-a-b}\to+\infty$, $k\to+\infty$.
\par Using theorem \ref{theorem_equval_G_l_estimates} and theorem \ref{theorem_G_imply_G'_G''} we conclude that $L^{p(\cdot)}[0;1]$ does not have property $G''$.
\par Note that $1/(p(\cdot)+c)\in BLO^{1/\log}$ for all $c>0$. Consequently exponents $p(\cdot)+c$ give us the spaces with same property.
\par Proof of the second part of theorem \ref{theorem_BLO_G'_notG''}. Note that by theorem \ref{theorem_BMO_BLO_VLO} and theorem \ref{theorem_G'_and_condition_A_imply_G''} we conclude that space $L^{(p(\cdot)+c)'}[0;1]$ possesses property $G''$ for some constant $c>0$.
It is clear that space $L^{(p(\cdot)+c)'}[0;1]$ does not have property $G'$ (because $L^{(p(\cdot)+c)}[0;1]$ does not have property $G''$).

\end{proof}


\end{document}